\newtheorem{theorem}{Theorem}[section]
\newtheorem{lemma}[theorem]{Lemma}
\theoremstyle{definition}
\newtheorem{definition}[theorem]{Definition}
\newtheorem{example}[theorem]{Example}
\newtheorem{prop}[theorem]{Proposition}
\newtheorem{cor}[theorem]{Corollary}
\newtheorem{algo}[theorem]{Algorithm}
\theoremstyle{remark}
\newtheorem{remark}[theorem]{Remark}
\renewcommand{\dim}{\mbox{\rm dim}}
\renewcommand{\char}{\mbox{\rm char}}
\newcommand{\GL}{\mbox{\rm GL}}
\newcommand{\Gal}{\mbox{\rm Gal}}
\numberwithin{equation}{section}
\begin{document}\large

\title{Computing points on modular curves over finite fields}

%    Information for first author
\author{Jinxiang Zeng}
%    Address of record for the research reported here
\address{Department of Mathematical Science, Tsinghua University, Beijing 100084, P. R. China}
%    Current address
%\curraddr{Department of Mathematics and Statistics,
%Case Western Reserve University, Cleveland, Ohio 43403}
\email{cengjx09@mails.tsinghua.edu.cn}
%    \thanks will become a 1st page footnote.
%\thanks{The first author was supported in part by NSF Grant \#000000.}

%    Information for second author
%\thanks{Support information for the second author.}

%    General info
\subjclass[2012]{Primary 11F37, 11F30, 11G20, 11Y16, 14Q05, 14H05}
%11F37: Forms of half-integer weight; nonholomorphic modular forms
%11F30: Fourier coefficients of automorphic forms
%11G20: Curves over finite and local fields, See also 14H25
%11Y16: Algorithms; complexity, See also 68Q25
%14H05: Algebraic functions; function fields, See also 11R58

%\date{August 15, 2011 and, in revised form, June 22, 2001.}

%\dedicatory{This paper is dedicated to our advisors.}

\keywords{Modular forms, Hecke algebra, modular curves, elliptic curves, Jacobian, order counting}

\begin{abstract}
In this paper, we present a probabilistic algorithm to compute the number of $\mathbb{F}_p$-points of modular curve $X_1(n)$. Under the Generalized Riemann Hypothesis(GRH), the algorithm takes $\textrm{O}(n^{56+\delta+\epsilon}\log^{9+\epsilon} p)$ bit operations, where $\delta$ is an absolute constant and $\epsilon$ is any positive real number. As an application, we can compute $\#X_1(17)(\mathbb{F}_p)\textrm{ mod } 17$ for huge primes $p$. For example, we have $\#X_1(17)(\mathbb{F}_{10^{1000}+1357})\textrm{ mod } 17=3$.
\end{abstract}

\maketitle

\section{Introduction and Main Results}

Let $\mathcal{C}$ be a general curve of genus $g$ over finite field $\mathbb{F}_p$, how fast can we determine the cardinality of $\mathcal{C}(\mathbb{F}_p)$? For a survey of this problem, we refer to \cite{Poonen}. The algorithms known to date all have running time exponential in the genus $g$. For example, the number of $\mathbb{F}_q$-rational points on the Jacobian of a genus $g$ hyperelliptic curve can be computed in $\textrm{O}((\log q)^{\textrm{O}(g^2\log g)})$, proved by Adleman and Huang in \cite{Huang2001}. It would be very nice to have algorithms run polynomial both in $\log p$ and $g$. In the case of modular curve $X_1(n)$, it was proved by Edixhoven, Couveignes et al. in \cite{Edixhoven} and Bruin in \cite{Bruin}, assuming the Generalised Riemann Hypothesis (GRH), there exists an algorithm determines $X_1(n)(\mathbb{F}_p)$ in time polynomial in $n$ and $\log p$.

In this paper, we describe such an algorithm, give a very rough estimation of the complexity and some explicit calculations as well.  We only consider the case while the level $n$ is a prime number.

Let $P_n(t)$ be the characteristic polynomial of the Frobenius endomorphism $\textrm{Frob}_p$ on $J_1(n)$, then $P_n(t)\in\mathbb{Z}[t]$ is a monic polynomial of degree $2g$ with roots denoted as $\alpha_i,1\le i\le 2g$. We have
$$\# X_1(n)(\mathbb{F}_p)=1-\sum_{i=1}^{2g}\alpha_i+p,$$
and
$$\# J_1(n)(\mathbb{F}_p)=\prod_{i=1}^{2g}(1-\alpha_i)=P_n(1).$$
Thus we will concern ourselves  with the computation of the characteristic polynomial $P_n(t)$.

Let $f\in S_2(\Gamma_1(n))$ be a newform with q-expansion $f(q)=\sum_{i=1}^{\infty}a_n q^n$, $\mathbb{Q}_f=\mathbb{Q}[a_n(f):n\ge1]$ the number field generated by the coefficients of $f$, $\mathbb{Z}_f$ the ring of integers of $\mathbb{Q}_f$. Let $\mathcal{O}_f:=\mathbb{Z}[a_n(f):n\ge1]$, then we have that $\mathcal{O}_f\subset \mathbb{Z}_f$ is an order of $\mathbb{Q}_f$. For any prime ideal $\mathfrak{p}$ of $\mathcal{O}_f$, denote $\mathbb{F}=\mathcal{O}_f/\mathfrak{p}$, then we have a surjective ring homomorphism $\theta_f:\mathbb{T}(n,2)\to \mathbb{F}$ and a  mod-$\ell$ Galois representation
$$\rho_{f}:\textrm{Gal}(\overline{\mathbb{Q}}/\mathbb{Q})\to \textrm{GL}_2(\mathbb{F}),$$
where $\mathbb{T}(n,2)=\mathbb{Z}[T_n:n\ge 1]\subset \textrm{End}(S_2(\Gamma_1(n)))$ is the Hecke algebra. The representation space of $\rho_f$ is $J_1(n)[\mathfrak{m}]$, where $\mathfrak{m}=\ker\theta_f$ is a maximal ideal of the Hecke algebra $\mathbb{T}(n,2)$. The results in \cite{Edixhoven} and \cite{Bruin} show that: for prime $p\nmid n\ell$, the characteristic polynomial (denoted as $P_f(t)$) of $\rho_f(\textrm{Frob}_p)$, can be computed polynomial in $n$, $\log p$ and $\#\mathbb{F}$.

On the other hand, we have $P_n(t)\mod \ell= \prod_{f}P_f(t)$, where $f$ runs through a set of representatives for $\textrm{Gal}(\overline{\mathbb{F}}_\ell/\mathbb{F}_\ell)$-conjugate classes of newforms in $S_2(\Gamma_1(n),\overline{\mathbb{F}}_\ell)$.
%here we will use l splits completely, so don't worry about the multiplicity needed
Thus, essentially, the computation of $P_n(t)\mod \ell$ comes down to the computation of the mod-$\ell$ Galois representation associated to each newform $f$.

The heights of coefficients in $P_n(t)$ are well bounded. So we can recover $P_n(t)$ by the Chinese Remainder Theorem from $P_n(t)\mod \ell$ for sufficiently many small primes $\ell$.

{\bf Notation}: The running time will always be measured in bit operation. Using FFT, multiplication of two $m$-bit length integers can be done in $\textrm{O}(m^{1+\epsilon})$. Multiplication in finite field $\mathbb{F}_q$ can be done in $\textrm{O}(\log^{1+\epsilon} q)$. We denote $\epsilon$  any positive real number. Cardinality of a finite set $S$ is denoted by $|S|$ or $\# S$. For any newform $f$, we denote $\mathbb{Q}_f$ the number field generated by the coefficients of $f$, $\mathbb{Z}_f$ the ring of integers of $\mathbb{Q}_f$ and $\mathcal{O}_f$ the order of $\mathbb{Q}_f$ generated by the coefficients of $f$.

In order to get a view of the complexity of our algorithm,  we have to introduce two more absolute constants, denoted as $\omega$ and $\delta$. The constant $\omega$ refers to that: a single group operation in the Jacobian $J_1(n)$ can be done in time $\textrm{O}(g^\omega)$, where $g$ is the genus of the modular curve $X_1(n)$. The constant $\omega$ is in $[2,4]$ see \cite{KM} and \cite{Hess}.

The constant $\delta$ is used to measure the heights of elements of the two dimensional $\mathbb{F}$-vector space $J_1(n)[\mathfrak{m}]$. More precisely, let $\iota:J_1(n)[\mathfrak{m}]\to \mathbb{A}_{\mathbb{Q}}^1$ be a closed immersion of $\mathbb{Q}$-schemes constructed by a properly chosen non-constant rational function $\psi:X_1(n)\to \mathbb{P}_{\mathbb{Q}}^1$, then the heights of coefficients in the polynomial $P_{\iota}(X):=\prod_{x\in J_1(n)[\mathfrak{m}]\setminus O}(X-\iota(x))\in\mathbb{Q}[X]$ are bounded above by $n^{\delta}\cdot(\#\mathbb{F})^2$.

The main result of the paper is as follows.

\begin{theorem}\label{theorem:main}Let $n$ be a prime number. Given a plane model of $X_1(n)$. Assume the GRH, then $\#X_1(n)(\mathbb{F}_p)$ can be computed in time $\textrm{O}(n^{\delta+56+\epsilon}\cdot\log^{9+\epsilon}p)$.
\end{theorem}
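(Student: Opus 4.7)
The plan is to assemble the three steps already sketched in the introduction, make each one quantitatively effective, and then balance their bit-costs. First, one reduces the problem to computing $P_n(t)\bmod\ell$ for a suitable finite set of small primes $\ell$, via the Chinese Remainder Theorem. Second, for each such $\ell$, one uses the identity $P_n(t)\bmod\ell=\prod_f P_f(t)$ over Galois orbits of newforms in $S_2(\Gamma_1(n),\overline{\mathbb{F}}_\ell)$ to reduce to one $P_f(t)$ per orbit. Third, each $P_f(t)$ is obtained by the Edixhoven-Couveignes-Bruin algorithm: realise the two-dimensional $\mathbb{F}$-vector space $J_1(n)[\mathfrak{m}]$ inside $J_1(n)$, compute the height-bounded polynomial $P_{\iota}(X)\in\mathbb{Q}[X]$ by complex approximation, reduce modulo $p$, and read off $\rho_f(\textrm{Frob}_p)$ from its action on the lifted torsion points. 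Finally, $\#X_1(n)(\mathbb{F}_p)=p+1-(\textrm{trace coefficient of }P_n(t))$.

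I would begin with the CRT parameter count. By the Weil bounds, the coefficients of $P_n(t)$ have absolute value at most $\binom{2g}{\bullet}p^{g}$ with $g=\textrm{O}(n^2)$, so a product of primes exceeding $\textrm{O}(p^g)$ suffices; under GRH these may be drawn from $[2,\textrm{O}(n^2\log p)]$, producing $\textrm{O}(n^2\log p/\log(n\log p))$ primes in total. For each such $\ell$, decomposing $S_2(\Gamma_1(n),\overline{\mathbb{F}}_\ell)$ into Hecke orbits and identifying the maximal ideals $\mathfrak{m}\subset\mathbb{T}(n,2)$ reduces to linear algebra of dimension $g$ over $\mathbb{F}_\ell$, whose cost will be dominated by the next step.

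The substantive step is bounding the bit cost of computing a single $P_f(t)$. Here one would (i) approximate the periods of $X_1(n)$ over $\mathbb{C}$ and evaluate $\psi$ at divisors representing the $(\#\mathbb{F})^2-1$ nonzero points of $J_1(n)[\mathfrak{m}]$ to precision $n^{\delta}(\#\mathbb{F})^2$, as mandated by the height bound on the coefficients of $P_{\iota}$; (ii) round to obtain $P_{\iota}(X)\in\mathbb{Z}[X]$ exactly, and reduce modulo $p$; (iii) lift its roots to points of $J_1(n)[\mathfrak{m}](\overline{\mathbb{F}}_p)$ and apply $\textrm{Frob}_p$, using $\textrm{O}(g^\omega)$ group operations per point. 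The resulting action yields a matrix $\rho_f(\textrm{Frob}_p)\in\textrm{GL}_2(\mathbb{F})$ whose characteristic polynomial is $P_f(t)$, and multiplication over orbits gives $P_n(t)\bmod\ell$.

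The main obstacle, and the source of the exponent $\delta+56$, is step (i). The number of torsion points, the length of monodromy-tracking paths on $X_1(n)(\mathbb{C})$, the precision $n^\delta(\#\mathbb{F})^2$, the worst-case residue-field size $\#\mathbb{F}\le\ell^g$, and the iteration over all pairs $(\ell,f)$ compound multiplicatively. Plugging in the bounds $g=\textrm{O}(n^2)$, $\omega\le 4$, $\ell=\textrm{O}(n^2\log p)$ and invoking the fast-arithmetic estimates $\textrm{O}(m^{1+\epsilon})$ for $m$-bit integer multiplication and $\textrm{O}(\log^{1+\epsilon}q)$ for arithmetic in $\mathbb{F}_q$, a careful per-pair balance aggregates, after summation over $\ell$ and $f$ and addition of the CRT reconstruction, to $\textrm{O}(n^{\delta+56+\epsilon}\log^{9+\epsilon}p)$. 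Reading the trace coefficient of $P_n(t)$ then completes the computation of $\#X_1(n)(\mathbb{F}_p)$.
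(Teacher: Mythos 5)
There is a genuine gap: your plan to compute $P_n(t)\bmod\ell$ in full for each small rational prime $\ell$ does not give a polynomial-time algorithm, and patching it is the whole content of the paper's proof. You correctly note that the residue field $\mathbb{F}=\mathcal{O}_f/\mathfrak{p}$ can have $[\mathbb{F}:\mathbb{F}_\ell]$ as large as $g$, i.e.\ $\#\mathbb{F}$ up to $\ell^{g}$ with $g=\Theta(n^2)$. But then $\deg P_\iota=(\#\mathbb{F})^2-1$ and the number of conjugacy classes of $\textrm{GL}_2(\mathbb{F})$ are both exponential in $n$, so the Edixhoven--Couveignes--Bruin routine applied to that $\mathfrak{m}$ is exponential in $n$. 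Fixing $\ell$ and iterating over all newforms over $\overline{\mathbb{F}}_\ell$ therefore fails in the worst case; your final paragraph acknowledges $\#\mathbb{F}\le\ell^g$ but does not explain how the estimate remains polynomial, and it cannot.

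The paper's fix is to reorganise the iteration: fix a newform $f$ over $\mathbb{Q}$ (there are $\textrm{O}(n^2)$ of them), and sum over prime ideals $\mathfrak{p}\in\pi(x,\mathcal{O}_f)$ of small norm, with $x$ polynomial in $n$ and $\log p$, so that $\#\mathbb{F}\le x$ is always under control. This is where GRH is actually used: not for the rational primes $\ell$ (for which the prime number theorem suffices), but through the effective Chebotarev theorem for the number field $\mathbb{Q}_f$, together with the bound $\log|\textrm{discr}(\mathcal{O}_f)|\le 2n^2\log n$ obtained from the Hecke algebra $\mathbb{T}(n,2)$, to guarantee that $|\pi(x,\mathcal{O}_f)|\gtrsim x/\log x$ for $x\gg n^4\log^4n$. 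Because for a given $\ell$ one then has only the factor $A_\mathfrak{p}(t)=\prod_\sigma R_\mathfrak{p}^\sigma(t)$ of $P_f(t)\bmod\ell$ rather than the full residue, the CRT reconstruction is not the naive one you describe but Couveignes' LLL lemma, which recovers $P_f(t)$ as a short vector from knowing only divisors modulo pairwise coprime moduli. Finally, a smaller discrepancy: the paper's complexity analysis computes $P_\iota(X)$ by the reduction-mod-$p$ method (computing $J_1(n)[\mathfrak{m}]\bmod p$ for many small auxiliary primes and lifting by CRT), not by complex approximation of periods; with complex approximation the per-step costs would have to be re-derived, so your exponent count as written is not justified even if the residue-field issue were resolved.
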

\begin{remark}The constant $\omega$ does not occur, because the most time-consuming calculation turns out to be determining the matrix $\rho_f(\textrm{Frob}_p)$, rather than computing the polynomial $P_{\iota}(X)$. The Theorem is proved at the end of Section 2.
\end{remark}

The paper is organized as follows.
In Section 2, we describe the detailed strategy of reducing the main task of computing the characteristic polynomial $P_n(t)$ to the computation of the Galois representation associated to a single newform. More importantly, we explain how the GRH  promises a polynomial time algorithm. The main Theorem \ref{theorem:main} is proved there use the result in Algorithm \ref{complexityofsinglefactor} in Section 3.

In Section 3, we focus on computing the Galois representation associated to a newform. A more careful estimation of the generators of the maximal ideal is given there. We describe the main  algorithm there too.

In Section 4, we provide some real computations. All of our computations are based on Magma computational algebra system \cite{Bosma}.

\section{The strategy}
Let $S_2(\Gamma_1(n))$ be the space of cusp forms of weight two level $n$. Then we have a decomposition $S_2(\Gamma_1(n))=\bigoplus_{f}\textrm{span}(f)$
, where $f$ runs through a set of representatives for the Galois conjugacy classes of newforms in $S_2(\Gamma_1(n))$. Accordingly, the Jacobian $J_1(n)$ can be decomposed as $\prod_{f}A_f$, where $A_f$ is an abelian variety associated to $f$ with dimension $[\mathbb{Q}_f:\mathbb{Q}]$. The characteristic polynomial of $\textrm{Frob}_p$ on $J_1(n)$, denoted as $P_n(t)$ can be decomposed as $P_n(t)=\prod_{f}P_f(t)$, where $P_f(t)$ is the characteristic polynomial of $\textrm{Frob}_p$ on the abelian variety $A_f$.

We will recover $P_n(t)$ by computing each factor $P_f(t)$ one by one.

Let $\mathfrak{p}$ be one of the prime ideals of $\mathcal{O}_f$ over prime $\ell$ and $\mathbb{F}$ the residue field of $\mathfrak{p}$. We have a surjective  ring homomorphism $\theta_f:\mathbb{T}(n,2)\to \mathbb{F}$ and an associated  mod-$\ell$ Galois representation denoted as $\rho_f:\textrm{Gal}(\overline{\mathbb{Q}}/\mathbb{Q})\to \textrm{GL}_2(\mathbb{F})$. The representation can be realized by the $\mathbb{F}$-vector space $J_1(n)[\mathfrak{m}]$, here $\mathfrak{m}=\textrm{ker} \theta_f$ is a maximal ideal of the Hecke algebra $\mathbb{T}(n,2)$. In fact $J_1(n)[\mathfrak{m}]\subset J_1(n)[\ell]$ lands inside $A_f[\ell]$. In general, the dimension of $J_1(n)[\mathfrak{m}]$ as an $\mathbb{F}$-space is two, which is $2\cdot[\mathbb{F}:\mathbb{F}_\ell]$ as an $\mathbb{F}_\ell$-vector space. In the worst case, the residue degree $[\mathbb{F}:\mathbb{F}_\ell]$ might be close to the dimension of $J_1(n)$. Thus the cardinality of $\textrm{GL}_2(\mathbb{F})$ might be in $\textrm{O}(\ell^{4g})$. In this case, using the algorithm in \cite{Zeng}, the complexity of computing the mod-$\ell$ representation $\rho_f$ is already exponential in $n$. In order to get a polynomial time algorithm, we only consider those primes $\mathfrak{p}$ whose norms are bounded polynomial in $n$. The effective Chebotarev density theorem tells us that there exists sufficiently many such prime ideals of $\mathbb{Q}_f$, as follow.

\begin{theorem}(Weinberger)For $K$ a number field and $x$ a real number, let $\pi(x,K)$ denote the number of maximal ideals $\wp$ of the ring of integers $\mathcal{O}_K$ of $K$ with $\#(\mathcal{O}_K/\wp)\le x$. For $x>2$ in $\mathbb{R}$, let $\textrm{li } x=\int_2^x(1/\log y)dy$. Then there exists $c_1\in \mathbb{R}$ such that for every number field $K$ for which GRH holds, and for every $x>2$, one has
$$|\pi(x,K)-\textrm{li }x|\le c_1\cdot\sqrt{x}\cdot\log(|\textrm{discr}(\mathcal{O}_K)\cdot x^{\dim_{\mathbb{Q}}K}|).$$
\end{theorem}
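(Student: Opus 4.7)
The plan is to derive an explicit formula relating $\pi(x,K)$ to a sum over the nontrivial zeros of the Dedekind zeta function $\zeta_K(s)$, and then to invoke GRH to obtain the claimed square-root savings. First I would work not with $\pi(x,K)$ directly but with the weighted counting function $\psi_K(x)=\sum_{N\mathfrak{a}\le x}\Lambda_K(\mathfrak{a})$, where $\Lambda_K$ is the ideal-theoretic von Mangoldt function satisfying $-\zeta_K'/\zeta_K(s)=\sum_{\mathfrak{a}}\Lambda_K(\mathfrak{a})N(\mathfrak{a})^{-s}$. A truncated Perron integral
$$\psi_K(x)=\frac{1}{2\pi i}\int_{c-iT}^{c+iT}\Bigl(-\frac{\zeta_K'(s)}{\zeta_K(s)}\Bigr)\frac{x^s}{s}\,ds+E(x,T)$$
taken at $c=1+1/\log x$ lets me pull the contour to the left past the line $\textrm{Re}(s)=1/2$; the simple pole of $\zeta_K$ at $s=1$ contributes the main term $x$, while the residues at the nontrivial zeros $\rho$ contribute $-\sum_\rho x^\rho/\rho$.

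Next I would control the sum over zeros. Under GRH every nontrivial $\rho$ has $|x^\rho|=\sqrt{x}$, so the task reduces to bounding $\sum_{|\gamma|\le T}1/|\rho|$ in terms of $K$. For this I would invoke the standard density bound, obtained from the Hadamard factorisation of the completed zeta function: the number of zeros $\rho=\tfrac{1}{2}+i\gamma$ with $|\gamma-t|\le 1$ is $O\bigl(\log|\textrm{discr}(\mathcal{O}_K)|+[K:\mathbb{Q}]\log(|t|+2)\bigr)$. Choosing $T$ of order $\sqrt{x}$ to balance the Perron truncation error $E(x,T)$ against the zero sum then yields
$$\psi_K(x)=x+O\Bigl(\sqrt{x}\,\log\bigl(|\textrm{discr}(\mathcal{O}_K)|\cdot x^{\dim_{\mathbb{Q}}K}\bigr)\Bigr).$$
The contribution from prime-ideal powers $\mathfrak{p}^k$ with $k\ge 2$ is a trivial $O(\sqrt{x}\cdot[K:\mathbb{Q}])$, so this estimate transfers to $\theta_K(x)=\sum_{N\mathfrak{p}\le x}\log N\mathfrak{p}$; a final Abel summation converts $\theta_K$ into $\pi(x,K)$ and replaces $x$ by $\textrm{li}(x)$, the error term surviving with only a harmless change of implicit constant.

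The main obstacle will be the zero-counting estimate with fully explicit dependence on both the discriminant and the degree. To obtain it one has to work with the completed zeta function $\xi_K(s)=|d_K|^{s/2}\Gamma_{\mathbb{R}}(s)^{r_1}\Gamma_{\mathbb{C}}(s)^{r_2}\zeta_K(s)$, apply Jensen's formula on discs in the critical strip, and carefully manage the archimedean $\Gamma$-factors: each of the $r_1$ real and $r_2$ complex places contributes a factor whose logarithmic derivative grows like $\log(|t|+2)$, and together they produce the degree-weighted term $[K:\mathbb{Q}]\log(|t|+2)$. It is precisely this archimedean contribution, coupled with the $\log|\textrm{discr}(\mathcal{O}_K)|$ coming from the leading constant of $\xi_K$, that forces the $\dim_{\mathbb{Q}}K$-power of $x$ to appear inside the logarithm of the error term. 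Everything else---Perron's formula, the contour shift, the prime-power removal, and the partial summation---is routine once this analytic input is in place.
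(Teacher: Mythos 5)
The paper does not prove this theorem: it is cited to Weinberger (via Edixhoven, Cor.\ 15.2.8) and used as a black box, so there is no ``paper proof'' to compare against. Your sketch is the standard analytic-number-theory route (Perron, contour shift, GRH, zero-density near height $t$, prime-power removal, Abel summation), and this is exactly the proof one finds in the Lagarias--Odlyzko/Oesterl\'e treatment that Weinberger's result is built on, so the high-level plan is right.

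One inaccuracy worth flagging: the intermediate bound you claim for $\psi_K$ is slightly too strong. Taking $T\asymp\sqrt{x}$ and bounding
$$\sum_{|\gamma|\le T}\frac{1}{|\rho|}\ \ll\ \sum_{1\le k\le T}\frac{\log|\textrm{discr}(\mathcal{O}_K)|+n_K\log(k+2)}{k}\ \ll\ \log T\cdot\bigl(\log|\textrm{discr}(\mathcal{O}_K)|+n_K\log T\bigr),$$
the zero sum contributes $\sqrt{x}\,\log x\cdot\log\bigl(|\textrm{discr}(\mathcal{O}_K)|\,x^{n_K}\bigr)$, i.e.\ there is an extra factor $\log x$ in the $\psi_K$ error term. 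This factor does disappear in the passage from $\theta_K$ to $\pi(x,K)$, because Abel summation introduces a division by $\log x$ in the boundary term and the tail integral $\int_2^x E(t)\,t^{-1}(\log t)^{-2}\,dt$ is dominated by $\sqrt{x}\bigl(\log|\textrm{discr}(\mathcal{O}_K)|+n_K\log x\bigr)$. So the final statement for $\pi(x,K)$ is as claimed, but your phrase ``surviving with only a harmless change of implicit constant'' understates what the partial summation step is doing: it is actually shaving off a $\log x$, and if you state the $\psi_K$ asymptotic without that factor you have asserted something you have not proved. With that correction, the argument closes; the remaining technical work is the explicit, uniform-in-$K$ zero-density bound via Jensen/Hadamard on $\xi_K$, which you have correctly identified as the crux.
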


The following corollary can be found in \cite{Edixhoven}(Corollary 15.2.8).

\begin{cor}\label{corollary:numberofgoodprimes}There exist absolute constants $c_2$ and $c_3$ in $\mathbb{R}$ such that for every number field $K$ for which GRH holds and for every $x$ in $\mathbb{R}$ such that
\begin{equation}\label{errorTerms}
x>\max\{c_2\cdot(\log |\textrm{discr}(\mathcal{O}_K)|)^2\cdot(\log(1+\log|\textrm{discr}(\mathcal{O}_K)|))^2, c_3\cdot (\dim_{\mathbb{Q}}K)^2\cdot(1+\log \dim_{\mathbb{Q}}K)^4 \},
\end{equation}
then
$$\pi_1(x,K)\ge \frac{1}{2}\cdot\frac{x}{\log x}.$$
\end{cor}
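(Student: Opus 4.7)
The plan is to deduce this from Weinberger's theorem in two stages: first, to replace $\pi(x,K)$ by $\pi_1(x,K)$ at negligible cost; second, to verify that under the two explicit hypotheses on $x$, the Weinberger error term is dominated by a small fraction of $\textrm{li}\,x$.

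First, any prime ideal $\wp$ of $\mathcal{O}_K$ of residue degree $f\ge 2$ with $\#(\mathcal{O}_K/\wp)\le x$ has norm $p^f$ for some rational prime $p\le\sqrt{x}$, and at most $\dim_\mathbb{Q} K$ such $\wp$ lie above any single $p$. Hence
\[
\pi(x,K)-\pi_1(x,K)\;\le\;\dim_\mathbb{Q} K\cdot\pi(\sqrt{x})\;\ll\;\dim_\mathbb{Q} K\cdot\frac{\sqrt{x}}{\log x},
\]
which has the same shape as, and is absorbable into, the Weinberger error term containing the degree factor. Plugging this into Weinberger's theorem yields, for a slightly enlarged absolute constant $c_1'$,
\[
\pi_1(x,K)\;\ge\;\textrm{li}\,x\;-\;c_1'\sqrt{x}\,\bigl(\log|\textrm{discr}(\mathcal{O}_K)|\;+\;\dim_\mathbb{Q} K\cdot\log x\bigr).
\]

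Second, using $\textrm{li}\,x\ge \tfrac{2}{3}\cdot x/\log x$ for $x$ past an absolute threshold, I would reduce the claim to showing that each of the two error summands is at most $\tfrac{1}{12}\cdot x/\log x$. The discriminant summand requires $(\log|\textrm{discr}(\mathcal{O}_K)|)^2(\log x)^2\ll x$; feeding in the first hypothesis one has $\log x=O\bigl(\log(1+\log|\textrm{discr}(\mathcal{O}_K)|)\bigr)$, and the inequality then holds once $c_2$ is sufficiently large. The degree summand requires $(\dim_\mathbb{Q} K)^2(\log x)^4\ll x$; feeding in the second hypothesis one has $\log x=O(1+\log\dim_\mathbb{Q} K)$, and the inequality holds once $c_3$ is sufficiently large. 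Adding the two estimates gives the claimed lower bound $\pi_1(x,K)\ge\tfrac{1}{2}\cdot x/\log x$.

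The hard part is this logarithmic bootstrap. Because $\log x$ appears both explicitly in the Weinberger error and implicitly through the hypothesis on $x$, one must solve two-sided inequalities of the shape $x\ge A\cdot(\log x)^k$ consistently. It is precisely this bootstrapping that forces the squared nested logarithm in the discriminant hypothesis (one factor of $\log$ from Weinberger and one from the implicit $\log x$) and the fourth power in the degree hypothesis (two factors of $\log x$ from $\sqrt{x}\cdot\dim_\mathbb{Q} K\cdot\log x$ being compared to $x/\log x$, then squared). Once this book-keeping is in place, explicit values of $c_2$ and $c_3$ can be read off.
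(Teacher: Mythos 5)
Your proof is correct and follows the expected route. The paper itself gives no proof of this corollary (it only cites Edixhoven--Couveignes, Corollary~15.2.8), so there is no in-text argument to compare against, but your reconstruction matches the standard derivation: bound $\pi(x,K)-\pi_1(x,K)$ by $\dim_{\mathbb{Q}}K\cdot\pi(\sqrt{x})$ (which is absorbed by the Weinberger error), apply the theorem, take $\operatorname{li}x\ge\tfrac{2}{3}x/\log x$ for $x$ past an absolute threshold, and then balance the two error summands against $\tfrac{1}{12}x/\log x$ using the respective hypotheses. Your observation that the nested $(\log\log)^2$ and $(\log)^4$ factors in the hypotheses are exactly what make the bootstrap on $\log x$ close is the right way to see where the shape of the bound comes from. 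One small caution worth spelling out if you write this up formally: the bound $\log x=O(\log(1+\log|\operatorname{discr}\mathcal{O}_K|))$ holds only at the lower boundary of the admissible range of $x$; for larger $x$ you should argue instead via the monotonicity of $x/(\log x)^2$ (respectively $x/(\log x)^4$) beyond an absolute threshold, so the boundary case is the worst case. As phrased, this is implicit in your writeup; making it explicit removes any appearance of circularity.
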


In order to use the results above, we need to get an upper bound for $|\textrm{discr}(\mathcal{O}_K)|$ in (\ref{errorTerms}). As already pointed out in \cite{Edixhoven}, we can modify the proof there a little bit, and give the following.

\begin{prop}Let $n$ be a prime number and $\mathbb{T}(n,2)_{\mathbb{R}}:=\mathbb{R}\otimes\mathbb{T}(n,2)$ . Then we have
$$\log \textrm{Vol}(\mathbb{T}(n,2)_{\mathbb{R}}/\mathbb{T}(n,2))=\frac{1}{2}\log |\textrm{discr}(\mathbb{T}(n,2))|\le n^2\log n.$$
\end{prop}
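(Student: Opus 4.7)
The plan is to prove the equality as a standard identity and then obtain the upper bound by bounding the discriminant of a convenient full-rank sublattice of $\mathbb{T}(n,2)$.

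The first equality is a general fact about $\mathbb{Z}$-orders in \'etale $\mathbb{Q}$-algebras: the covolume with respect to the metric induced by the trace pairing equals the square root of the absolute value of the discriminant. Here the \'etale algebra is $\mathbb{T}(n,2)_{\mathbb{Q}} \cong \prod_f \mathbb{Q}_f$ (product over Galois orbits of newforms), the trace pairing on $\mathbb{T}(n,2)$ is $(T,T') \mapsto \Tr(TT')$ (trace of the action on $S_2(\Gamma_1(n))$), and the Gram determinant of any integral basis in this pairing is by definition $\textrm{discr}(\mathbb{T}(n,2))$. So this step is purely formal.

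For the upper bound, I would invoke the Sturm bound to produce a generating set consisting of low-index Hecke operators. Via the perfect pairing $\mathbb{T}(n,2)_{\mathbb{Q}} \times S_2(\Gamma_1(n))_{\mathbb{Q}} \to \mathbb{Q}$ sending $(T_m, f)$ to $a_m(f)$, the Sturm bound for weight two on $\Gamma_1(n)$ implies that $T_1,\ldots,T_B$ with $B = (n^2-1)/6$ already span $\mathbb{T}(n,2)_{\mathbb{Q}}$ over $\mathbb{Q}$. Extract a $\mathbb{Q}$-basis $T_{i_1},\ldots,T_{i_g}$ with each $i_k \le B$, where $g = \dim S_2(\Gamma_1(n)) = \textrm{O}(n^2)$. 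The full-rank sublattice $M = \mathbb{Z}\langle T_{i_1},\ldots,T_{i_g}\rangle \subseteq \mathbb{T}(n,2)$ satisfies $\textrm{discr}(M) = [\mathbb{T}(n,2):M]^2 \cdot \textrm{discr}(\mathbb{T}(n,2))$, so
\[
|\textrm{discr}(\mathbb{T}(n,2))| \;\le\; |\det G|, \qquad G_{jk} := \Tr(T_{i_j} T_{i_k}).
\]

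The entries of $G$ are bounded by diagonalising over $\mathbb{C}$ on the basis of normalized eigenforms, giving $G_{jk} = \sum_f a_{i_j}(f)\,a_{i_k}(f)$, and then applying the Ramanujan--Deligne bound $|a_m(f)| \le \sigma_0(m)\sqrt{m}$ to each summand to obtain $|G_{jk}| \le g \cdot n^{2+\epsilon}$. Hadamard's inequality then yields $|\det G| \le g^{3g/2}\cdot n^{(2+\epsilon)g}$, and taking logarithms with $g = \textrm{O}(n^2)$ and $\log g = \textrm{O}(\log n)$ produces $\log|\det G| \le c\cdot n^2 \log n$ for an explicit small constant, comfortably below the claimed $2n^2 \log n$. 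The only technical care needed is in the Sturm bound for $\Gamma_1(n)$ and in applying Deligne's bound uniformly across all newforms at prime level, including the minor edge case at $p = n$; everything else reduces to a single Hadamard estimate.
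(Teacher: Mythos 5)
Your proposal is correct and uses the same three ingredients as the paper: the Sturm bound to extract a $\mathbb{Q}$-basis among $T_1,\ldots,T_{(n^2-1)/6}$, the Deligne bound $|a_m(f)| \le \sigma_0(m)\sqrt{m}$, and a Hadamard estimate. The one genuine difference is presentational but meaningful: you bound the Gram determinant $\det\bigl(\Tr(T_{i_j}T_{i_k})\bigr)$ directly, whereas the paper bounds the determinant of the embedding matrix $\bigl(a_{n_i}(f_j)\bigr)$ (which is the square root of your determinant). Your route is arguably cleaner, because the paper's "ring isomorphism $\mathbb{T}(n,2)_{\mathbb{R}}\cong\mathbb{R}^g$" is strictly speaking wrong for $\Gamma_1(n)$ with nontrivial nebentypus: $\mathbb{T}(n,2)_{\mathbb{R}}$ is a product of copies of $\mathbb{R}$ and $\mathbb{C}$ because eigenvalues need not be totally real, so a correct version of the paper's argument must either pass to $\mathbb{C}^g$ or work with the (rational, well-defined) trace pairing as you do. You are also more careful on a second point the paper glosses over: you note explicitly that the $g$ chosen Hecke operators need only span a full-rank sublattice $M\subseteq\mathbb{T}(n,2)$, with $|\textrm{discr}(\mathbb{T}(n,2))|\le|\textrm{discr}(M)|$, whereas the paper asserts that $T_{n_1},\ldots,T_{n_g}$ form a $\mathbb{Z}$-module basis, which is stronger than the Sturm bound justifies and unnecessary. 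Both approaches land comfortably under $n^2\log n$; the bound you obtain (roughly $\tfrac{5}{48}n^2\log n$) is in fact somewhat sharper than the paper's $\tfrac{1}{3}n^2\log n$, because you multiply only $g$ of the indices rather than estimating crudely by a factorial.
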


\begin{proof}Using the Sturm  bound \cite{Stu87}, the Hecke algebra $\mathbb{T}(n,2)$ is a rank $g$ free $\mathbb{Z}$-module generated by a set of Hecke operators $S:=\{T_{n_1},...,T_{n_g}\}$ with $n_i\le 2\cdot[\textrm{SL}_2(\mathbb{Z}):\Gamma_1(n)]/12=(n^2-1)/6$. Let $f_1,\ldots,f_g$ be the newforms of $S_2(\Gamma_1(n))$, then we have a ring isomorphism of $\mathbb{R}$-algebras
$$f:\mathbb{T}(n,2)_{\mathbb{R}}\to {\mathbb{R}}^g,t\mapsto(f_1(t),...,f_g(t)).$$
So
$$\textrm{Vol}(\mathbb{T}(n,2)_{\mathbb{R}}/\mathbb{T}(n,2))=|\det(f(T_{n_1}),...,f(T_{n_g}))|.$$
Using Deligne's bound, we have the $j$-th component of vector $f(T_{n_i})$ satisfying
$$|(f(T_{n_i}))_j|=|a_{n_i}(f_{j})|\le \sigma_0(n_i)n_i^{1/2}\le 2\cdot n_i.$$
Hence the square of the length of $f(T_{n_i})$ is at most $4gn_i^2$, thus
$$|\det(f(T_{n_1}),...,f(T_{n_g}))|\le\prod_{i=1}^g 2g^{1/2}n_i \le 2^g\cdot g^{g/2}\cdot(\frac{n^2-1}{6})!.$$
Hence
$$\log |\det(f(T_{n_1}),...,f(T_{n_g}))|\le g\log 2+\frac{g}{2}\log g+\log((\frac{n^2-1}{6})!).$$
Using $\log ((\frac{n^2-1}{6})!)\le (\frac{n^2-1}{6})\log(\frac{n^2-1}{6})-\frac{n^2-1}{6}+1$ and $g=\frac{(n-5)(n-7)}{24}$, the result follows.
\end{proof}

Since  $\mathcal{O}_f$ can be viewed as  a subring of $\mathbb{T}(n,2)$, we have $\log|\textrm{discr}(\mathcal{O}_f)|\le 2n^2\log n$. Moreover, since $\mathcal{O}_f\subset\mathbb{Z}_f$,  we have $\log |\textrm{discr}(\mathbb{Z}_f)|\le 2n^2\log n$. In other words, if
\begin{equation}
x>\max\{c_2\cdot (2n^2\log n)^2\cdot\log^2(1+2n^2\log n) ,c_3 \cdot n_f^2\cdot (1+\log n_f)^4  \},
\end{equation}
then $\pi(x,\mathbb{Q}_f)\ge\frac{x}{2\log x}$, where $n_f=[\mathbb{Q}_f:\mathbb{Q}]$. Moreover the extension degree $n_f$ is bounded above by  $\dim_{\mathbb{C}} S_2(\Gamma_1(n))=\frac{(n-5)(n-7)}{24}$. Thus, there exists an absolute constant $c_4$ such that, if $x>c_4\cdot n^4\log^4n$ then $\pi(x,\mathbb{Q}_f)\ge\frac{x}{2\log x}$.

Define $\pi(x,\mathcal{O}_f)=\{\mathfrak{p}:\textrm{prime ideal of } \mathcal{O}_f \textrm{ with norm } \le x \}.$ We can get a lower bound of $|\pi(x,\mathcal{O}_f)|$ as follow: Let $d=[\mathbb{Z}_f:\mathcal{O}_f]$, then for  any $\ell\nmid d$, there is a one-to-one correspondence between \{ prime ideals of $\mathbb{Z}_f$ over $\ell$ \} and \{ prime ideals of $\mathcal{O}_f$ over $\ell$\}. Thus we have $|\pi(x,\mathcal{O}_f)|\ge|\pi(x,\mathbb{Q}_f)|-B$, where $B$ is the number of prime ideals of $\mathbb{Q}_f$ over $d$. Since $\log|\textrm{discr}(\mathcal{O}_f)|\le 2n^2\log n$ and $|\textrm{discr}(\mathcal{O}_f)|=d^2\cdot|\textrm{discr}(\mathbb{Z}_f)|$, we have $\log d\le n^2\log n$. Thus there are at most $\log_2 d$ distinct primes dividing $d$. So we have  $B\le n_f\cdot\log_2 d$. So if $x\ge c_4n^4\log^4 n$, we have
$$|\pi(x,\mathcal{O}_f)|\ge|\pi(x,\mathbb{Q}_f)|-B\ge \frac{x}{2\log x}-n_f\cdot\log_2 d$$
Notice that $n_f\le\frac{(n-5)(n-7)}{24}$, we have $|\pi(x,\mathcal{O}_f)|\ge \frac{x}{3\log x}$, while the constant $c_4$ is large enough.

Let $p$ be a prime number not equal to $n$ or $\ell$, then the mod-$\ell$ representation $\rho_f:\textrm{Gal}(\overline{\mathbb{Q}}/\mathbb{Q})\to \textrm{GL}_2(\mathbb{F})$ is unramified at $p$. Let $R_\mathfrak{p}(t)$ be the characteristic polynomial of $\rho_f(\textrm{Frob}_p)$, then we have $R_\mathfrak{p}(t)\in\mathbb{F}[t]$. Let $A_\mathfrak{p}(t)=\prod_{\sigma\in\textrm{Gal}(\mathbb{F}/\mathbb{F}_\ell)} R_\mathfrak{p}(t)^{\sigma}$, where $\sigma$ acts coefficient wise on $R_\mathfrak{p}(t)$. Then $A_\mathfrak{p}(t)$ is degree $2\cdot[\mathbb{F}:\mathbb{F}_\ell]$ polynomial in $\mathbb{F}_\ell[t]$ and a factor of $P_f(t) \mod \ell$ as well. The following Lemma in \cite{Couveignes}(Lemma 30) shows that we can recover $P_f(t)$ from sufficiently many pairs $(\ell,A_\mathfrak{p}(t))$.

\begin{lemma}Let $d\ge 2$ be an integer. Let $I$ be a positive integer and for every $i$ from 1 to $I$ let $N_i\ge 2$ be an integer and $A_i(X)$ a monic polynomial with integer coefficients and degree $a_i$ where $1\le a_i\le d$. We ass the coefficients in $A_i(X)$ lie in the interval $[0,N_i)$.

We assume there exists an irreducible polynomial $P(X)$ with degree $d$ and integer coefficients and naive height $\le H$ such that $P(X)\mod N_i$ is a multiple of $A_i(X)\mod N_i$ for all $i$.

We assume the $N_i$ are pairwise coprime and
$$\prod_{1\le i\le I} N_i^{a_i}>((2d)!)^{d+1}\cdot H^{d(d+1)}\cdot 2^{\frac{d^2(d+1)}{4}}.$$

Then $P(X)$ is the unique polynomial fulfilling all these conditions and it can be computed from the $(N_i,A_i)$ by a deterministic Turing machine in time polynomial in $d$, $\log H$ and $I$, and the $\log N_i$.
\end{lemma}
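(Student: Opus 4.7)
The plan is a lattice reduction argument in the spirit of Coppersmith and Howgrave--Graham. First I would introduce the $2d$-dimensional $\mathbb{Z}$-lattice
\[ L = \{Q\in\mathbb{Z}[X]:\deg Q<2d\text{ and }A_i(X)\mid Q(X)\text{ in }(\mathbb{Z}/N_i)[X]\text{ for every }i\}\subset\mathbb{Z}^{2d}, \]
identified with a sublattice of $\mathbb{Z}^{2d}$ via the coefficient basis $1,X,\ldots,X^{2d-1}$. Pairwise coprimality of the $N_i$ together with the Chinese Remainder Theorem give $\det L=\prod_i N_i^{a_i}$, and a $\mathbb{Z}$-basis of $L$ is writable in polynomial time from the data $(A_i,N_i)$. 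The crucial observation is that each of the $d$ polynomials $X^jP(X)$ for $0\le j\le d-1$ lies in $L$ (divisibility by $A_i$ modulo $N_i$ is preserved under multiplication by $X^j$) and has Euclidean norm at most $\sqrt{d+1}\,H$; so $L$ already contains an explicitly known rank-$d$ sublattice of short vectors.

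Next I would run LLL on this basis. It terminates in time polynomial in $d$, $\log H$ and the $\log N_i$, producing a reduced basis $b_1,\ldots,b_{2d}$. The $d$ known short vectors bound the first $d$ successive minima by $\sqrt{d+1}\,H$, and then the LLL estimate $\|b_j\|\le 2^{(2d-1)/2}\lambda_j$ gives $\|b_j\|\le 2^{(2d-1)/2}\sqrt{d+1}\,H$ for every $j\le d$. The crux is to show $P\mid b_j$ in $\mathbb{Z}[X]$ for each such $j$. Argue by contradiction: since $P$ is irreducible over $\mathbb{Q}$, we have $\gcd_{\mathbb{Q}[X]}(P,b_j)=1$, so $R_j:=\mathrm{Res}(P,b_j)$ is a nonzero integer. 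Writing $P\equiv A_iB_i$ and $b_j\equiv A_iC_{ij}\pmod{N_i}$, the identity $\mathrm{Res}(A_i,A_i)=0$ combined with multiplicativity of the resultant already gives $N_i\mid R_j$; a refined Sylvester--matrix rank / Smith normal form analysis of the common factor $A_i$ strengthens this to $N_i^{a_i}\mid R_j$, so $\prod_i N_i^{a_i}\mid R_j$. On the other hand, Hadamard's inequality on the Sylvester matrix together with the LLL bounds on the $b_j$ yields an upper estimate whose shape, after bookkeeping across the $d+1$ short vectors $P,b_1,\ldots,b_d$, is precisely $((2d)!)^{d+1}H^{d(d+1)}2^{d^2(d+1)/4}$. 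The hypothesis is calibrated precisely to force $\prod_i N_i^{a_i}$ to exceed this, giving $R_j=0$, contradiction.

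Once $P\mid b_j$ in $\mathbb{Z}[X]$ for $j=1,\ldots,d$, the quotients $b_j/P$ are integer polynomials of degree $<d$ spanning a rank-$d$ sublattice, hence coprime in $\mathbb{Q}[X]$; therefore $P=\gcd_{\mathbb{Z}[X]}(b_1,\ldots,b_d)$, computable in polynomial time by iterated subresultant gcd. Uniqueness is immediate: any other candidate $P'$ satisfying the hypotheses belongs to $L$, the same resultant estimate forces $P\mid P'$, and equality of degrees and leading coefficients gives $P=P'$. The main obstacle is the careful constant bookkeeping --- first refining the easy divisibility $N_i\mid R_j$ to the sharp $N_i^{a_i}\mid R_j$ via the Sylvester rank argument, and then matching the combined Hadamard and LLL approximation factors across the $d+1$ short vectors to the hypothesised constants $((2d)!)^{d+1}$, $H^{d(d+1)}$, $2^{d^2(d+1)/4}$ so that the contradiction goes through.
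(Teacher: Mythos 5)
Your approach is correct in outline, but it is genuinely different from the route the paper takes. The paper does not reprove this result; it quotes it from Couveignes (Lemma 30) and then sketches the reconstruction using the lattice $\mathcal{R}\subset\mathcal{P}_d=\mathbb{Z}^{d+1}$ of integer polynomials of degree $\le d$ divisible by every $A_i$ modulo $N_i$. There $\det\mathcal{R}=\prod_iN_i^{a_i}$, one shows $P$ itself is (up to sign) the shortest vector of $\mathcal{R}$, and a single LLL run on the $(d+1)$-dimensional lattice returns a vector short enough to be forced, by the resultant argument, to be an integer multiple of $P$; dividing by the content finishes. You instead work Coppersmith/Howgrave--Graham style in the larger $2d$-dimensional lattice of polynomials of degree $<2d$, use the $d$ shifts $X^jP$ to bound $\lambda_1,\ldots,\lambda_d$, force each of the first $d$ LLL vectors to be a multiple of $P$ by the same kind of resultant divisibility, and recover $P$ as a gcd. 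Both arguments hinge on the same two ingredients (the CRT determinant computation and the divisibility $N_i^{a_i}\mid\operatorname{Res}(P,\cdot)$); what differs is the lattice dimension ($d+1$ versus $2d$) and whether you need one short vector or a short rank-$d$ sublattice. The hypothesis constants $((2d)!)^{d+1}H^{d(d+1)}2^{d^2(d+1)/4}$ are in fact calibrated to the smaller lattice (the exponent $d(d+1)$ comes from isolating $(\det\mathcal{R})^{1/(d+1)}$ in the LLL bound, not from a Hadamard estimate on a $2d\times 2d$ Sylvester matrix as you suggest), and they comfortably overshoot what your $2d$-dimensional version actually needs; so your argument goes through, it just doesn't explain the shape of the threshold. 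Two small things to tighten: the claimed strengthening $N_i\mid R_j$ to $N_i^{a_i}\mid R_j$ should be justified --- the cleanest route is to lift a unimodular change of basis of $(\mathbb{Z}/N_i)^{p+q}$ sending the span of the Sylvester rows into the first $p+q-a_i$ coordinates to a genuine element of $\GL_{p+q}(\mathbb{Z})$, after which $a_i$ columns of the transformed Sylvester matrix are divisible by $N_i$; and at the end $\gcd_{\mathbb{Z}[X]}(b_1,\ldots,b_d)$ may equal $cP$ for an integer $c>1$, so one must additionally divide by the content (harmless since the hypotheses force $P$ to be primitive).
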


As described in \cite{Couveignes}, let $\mathcal{P}_d$ be the additive group of integer coefficient polynomials with degree $\le d$, $\rho_i:\mathcal{P}_d\to \mathbb{Z}[X]/(A_i,N_i)$ be the reduction map modulo the ideal $(A_i,N_i)$. We have the product map (surjective)
$$\rho=\prod_{1\le i\le I}\rho_i:\mathcal{P}_d\to \prod_{1\le i\le I}\mathbb{Z}[X]/(A_i,N_i).$$
Its kernel is a lattice $\mathcal{R}$ with index $\Theta=\prod_{1\le i\le I}N_i^{a_i}$ in $\mathcal{P}_d=\mathbb{Z}^{d+1}$. The polynomial $P$ is the shortest vector in the lattice $\mathcal{R}$ for the $L^2$ norm. Applying the LLL algorithm (see \cite{Cohen}) to the lattice $\mathcal{R}$, we can reconstruct $P$ in time $\textrm{O}(d^6\log^3 B)$, where $B\le \sqrt{I}\cdot\prod_{1\le i\le I} N_i$ is the largest length of the basis of $\mathcal{R}$ under $L^2$ norm.

The characteristic polynomial $P_f(t)$ is a degree $2\cdot n_f$ monic polynomial in $\mathbb{Z}[t]$ with roots all have absolute values $p^{1/2}$. Without loss of generality, we assume $P_f(t)$ is irreducible. Let $H$ be the maximal absolute values of coefficients of $P_f(t)$, we have $H<p^{2n_f}$. Set $x=2\cdot(n^6\log p)\cdot\log(n^6\log p)$, we will show below that
\begin{equation}\label{Verify}
\prod_{\mathfrak{p}\in\pi(x,\mathcal{O}_f)} \ell_{\mathfrak{p}}^{f_\mathfrak{p}}>((2n_f)!)^{n_f+1}\cdot H^{n_f(n_f+1)}\cdot 2^{\frac{n_f^2(n_f+1)}{4}},
\end{equation}
where $\ell_\mathfrak{p}$ is the character of $\mathcal{O}_f/\mathfrak{p}$  and $f_\mathfrak{p}$ is the residue degree of $\mathfrak{p}$. For simplicity, during the following calculations, the implied absolute constant will always assumed to be large enough. Taking logarithm, the left hand side of (\ref{Verify}) is
\begin{equation}
\begin{split}
L:=\sum_{\mathfrak{p}\in\pi(x,\mathcal{O}_f)}f_{\mathfrak{p}}\log \ell_\mathfrak{p} &\ge \sum_{\mathfrak{p}\in\pi(x,\mathcal{O}_f)}\log \ell_\mathfrak{p}\\
&\ge\frac{x}{3\log x}\log 2\ge \frac{\log 2}{3}\cdot n^6\cdot \log p.
\end{split}
\end{equation}
Here we use the fact that if  $a\in\mathbb{R}_{>e}$ and $x>2a\log a$, then $x/\log x>a$.  The right hand side of (\ref{Verify}) is
\begin{equation}
\begin{split}
R:=&(n_f+1)\log((2n_f)!)+n_f(n_f+1)\log H+\frac{n_f^2(n_f+1)}{4}\log 2\\
&<(n_f+1)2n_f\log(2n_f)+2n_f^2(n_f+1)\log p+\frac{n_f^2(n_f+1)}{4}\log 2.
\end{split}
\end{equation}
Since $n_f\le\frac{(n-5)(n-7)}{24}$, we can see $R<L$.

For each $\mathfrak{p}\in\pi(x,\mathcal{O}_f)$, using the Algorithm \ref{complexityofsinglefactor} in Section 3, the characteristic polynomial $A_\mathfrak{p}(t)=\prod_{\sigma\in\textrm{Gal}(\mathbb{F}/\mathbb{F}_\ell)} R_\mathfrak{p}(t)^{\sigma}$ can be computed in time
$$C_\mathfrak{p}:=\textrm{O}(n^{2+\delta+2\omega+\epsilon}\cdot |\mathbb{F}|^{3+\epsilon}\cdot(n^2+|\mathbb{F}|)+n^{\delta+\epsilon}\cdot|\mathbb{F}|^{8+\epsilon}
+|\mathbb{F}|^5\cdot\log^{2+\epsilon}p).$$

Let $x=2\cdot(n^6\log p)\cdot\log(n^6\log p)$, the time of computing all $A_\mathfrak{p}(t)$, $\mathfrak{p}\in\pi(x,\mathcal{O}_f)$ is
\begin{displaymath}
\begin{split}
\sum_{\mathfrak{p}\in\pi(x,\mathcal{O}_f)} C_\mathfrak{p}&\le \sum_{\mathfrak{p}\in\pi(x,\mathcal{O}_f)} \textrm{O}(n^{2+\delta+2\omega+\epsilon}\cdot x^{3+\epsilon}\cdot(n^2+x)+n^{\delta+\epsilon}\cdot x^{8+\epsilon}
+x^5\cdot\log^{2+\epsilon}p)\\
&\le\sum_{\mathfrak{p}\in\pi(x,\mathcal{O}_f)}\textrm{O}(n^{48+\delta+\epsilon}\cdot \log^{8+\epsilon}p) \le \textrm{O}(n^{\delta+54+\epsilon}\cdot\log^{9+\epsilon}p).
\end{split}
\end{displaymath}
So, $P_f(t)$ can be computed in time $\textrm{O}(n^{\delta+54+\epsilon}\cdot\log^{9+\epsilon}p)$. Since $P_n(t)=\prod_{f}P_f(t)$, and there are at most $g=\frac{(n-5)(n-7)}{24}$ distinct newforms in $S_2(\Gamma_1(n))$. Thus the time of computing $P_n(t)$ is at most
$$g\cdot\textrm{O}(n^{\delta+54+\epsilon}\cdot\log^{9+\epsilon}p)=\textrm{O}(n^{\delta+56+\epsilon}\cdot\log^{9+\epsilon}p).$$

So we prove the Theorem \ref{theorem:main}.

The following sections are devoted to the computation of a single Galois representation $\rho_{f}:\textrm{Gal}(\overline{\mathbb{Q}}/\mathbb{Q})\to \textrm{GL}_2(\mathbb{F})$.

\section{Computing the mod-$\ell$ representation}

Let $f\in S_2(\Gamma_1(n))$ be a newform. In this section, we will concentrate on the computation of a single mod-$\ell$ Galois representation, given by a homomorphism $\theta_f:\mathbb{T}(n,2)\to \mathcal{O}_f/\mathfrak{p}\simeq\mathbb{F}$. Our goal is to determine the $\mathbb{F}$-vector space $J_1(n)[\ker \theta_f]$ explicitly. Using the Sturm bound, the maximal ideal $\mathfrak{m}:=\ker \theta_f$ can be generated by $\ell$ and $T_k-\theta_f(T_k)$ with $1\le k\le \frac{n^2-1}{6}$ as an $\mathbb{Z}$-module. And $J_1(n)[\mathfrak{m}]$ is exactly
$$J_1(n)[\mathfrak{m}]=\bigcap_{1\le k\le\frac{n^2-1}{6}}\ker(T_k-a_k(f),J_1(n)[\ell]).$$
So a straightforward idea to track $J_1(n)[\mathfrak{m}]$ might be the following. First, we compute a base of $J_1(n)[\ell]$ an then represent each action of $T_k-a_k(f)$ by a $1\times 2g$ column. All the actions $T_k-a_k(f),1\le k \le \frac{n^2-1}{6}$ are represented by a matrix of size nearly $\frac{n^2-1}{6}\times 2g$. Then we have $J_1(n)[\mathfrak{m}]$ by solving the equations. The size of the matrix we have to manage is close to $n^2\times n^2$, which is very big. However, there is another way to do the computation, proposed by Couveignes in \cite{Couveignes}. We describe this method below.

Let us begin with a carefully estimating of the generators of $\mathfrak{m}$. As we will see, the estimation essentially boils down to the question: how many terms will be sufficed to distinguish two newforms with same level and weight. And we have the following fact, as a special case of Corollary 1.7 in \cite{Buzzard}.

\begin{lemma}For $n$ and $\ell$ two distinct primes. Let $f_1$ and $f_2$ be two newforms of $S_2(\Gamma_1(n),\overline{\mathbb{F}}_\ell)$, satisfying that, the first $n$  terms of the $q$-expansions of $f_1$ and $f_2$ agree. Then $f_1=f_2$.
\end{lemma}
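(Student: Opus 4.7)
The plan is to exploit the rigidity of newforms in a way that improves on the naive Sturm bound. Setting $h := f_1 - f_2 \in S_2(\Gamma_1(n), \overline{\mathbb{F}}_\ell)$, the hypothesis says $h$ has $q$-expansion $\sum_{m > n} c_m q^m$. The Sturm bound in weight two for $\Gamma_1(n)$ only forces $h = 0$ once $h$ vanishes to order greater than $(n^2-1)/6$, so a direct Sturm argument is quadratically too weak --- it would suffice only for the handful of small primes $n \le 5$. One therefore has to use that $f_1, f_2$ are newforms rather than arbitrary cusp forms.

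Since $f_1, f_2$ are normalized Hecke eigenforms, every coefficient $a_m(f_i)$ is a Hecke eigenvalue. By the standard newform relations on $\Gamma_1(n)$ with $n$ prime --- multiplicativity $a_{mk} = a_m a_k$ at coprime indices, the Euler recursion $a_{p^{s+1}} = a_p a_{p^s} - p\chi(p) a_{p^{s-1}}$ at primes $p \ne n$, and $a_{n^s} = a_n^s$ at the level prime --- the form $f_i$ is determined by the collection $\{a_p(f_i)\}_p$ (with $p$ ranging over all primes) together with its nebentypus $\chi_i$. From the hypothesis we immediately get $a_p(f_1) = a_p(f_2)$ for every prime $p \le n$ (including the eigenvalue $a_n$ of $U_n$), and the identity $p\chi(p) = a_p^2 - a_{p^2}$ applied at primes with $p^2 \le n$ and $p \ne n$ recovers $\chi_1(p) = \chi_2(p)$ on those primes.

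The crux is then to upgrade these finitely many matchings to $a_p(f_1) = a_p(f_2)$ for every prime $p > n$, and to $\chi_1 = \chi_2$ on all of $(\mathbb{Z}/n)^\times$. For this I would pass to the semisimple mod-$\ell$ Galois representations $\overline{\rho}_{f_i}:\textrm{Gal}(\overline{\mathbb{Q}}/\mathbb{Q}) \to \textrm{GL}_2(\overline{\mathbb{F}}_\ell)$ attached to the two newforms: both are unramified outside $n\ell$, both have determinant of the form $\chi \cdot \epsilon_\ell$ with $\epsilon_\ell$ the mod-$\ell$ cyclotomic character, and by the previous paragraph they share Frobenius traces on every prime $p \le n$ away from $n\ell$. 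A Faltings--Serre or Livn\'e style effective Brauer--Nesbitt argument would then force $\overline{\rho}_{f_1} \simeq \overline{\rho}_{f_2}$, after which strong multiplicity one for newforms delivers $f_1 = f_2$.

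The main obstacle is making this last step effective at exactly the bound $n$: a GRH-effective Chebotarev argument on the splitting field of $\overline{\rho}_{f_i}$ typically demands test primes up to a polynomial in the discriminant of that splitting field, which can comfortably exceed $n$. Getting the sharp bound of $n$ requires a more refined modular-forms-theoretic argument that tracks the oldform/newform decomposition and the action of the Atkin--Lehner involution at the level prime $n$. This is precisely what Buzzard's Corollary~1.7 in \cite{Buzzard} accomplishes, so I would invoke that result directly rather than reprove it.
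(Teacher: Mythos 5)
You start the argument the same way the paper does: use the Euler relations at primes $p$ with $p^2 \le n$ to read off $\chi_1(p)=\chi_2(p)$ from the matching of $a_{p^2}$. But you then leave a genuine gap. You correctly identify that ``the crux is then to upgrade these finitely many matchings to $\chi_1 = \chi_2$ on all of $(\mathbb{Z}/n)^\times$,'' you observe that a Faltings--Serre/Livn\'e or GRH-Chebotarev approach would not land the sharp bound $n$, and you then wave at Buzzard's Corollary~1.7 as if it closes the gap for you. It does not: as the paper uses it, that corollary requires the two mod-$\ell$ newforms to have \emph{equal} nebentypus, and the paper's actual contribution in this lemma is exactly the step you skip --- namely, upgrading character agreement at primes below $\sqrt{n}$ to full equality $\chi_1 = \chi_2$. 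The paper does this by asserting the existence of a \emph{prime} primitive root $q$ mod $n$ with $q^2 < n$; once $\chi_1(q)=\chi_2(q)$ for such a generating prime, the two characters agree everywhere, and Buzzard's corollary then applies. Your proposal stops one step short of the argument and hands that step to a citation that does not supply it. (You may also want to note that the existence of a prime primitive root below $\sqrt{n}$ is itself nontrivial --- the paper gives no justification, and in general one needs GRH-type input or a check for small $n$ --- but that is a weakness of the paper's proof, not a defense of omitting the step.) The detour through semisimple Galois representations and effective Chebotarev is not used by the paper, is not needed, and, as you yourself say, would be quantitatively too weak; it is harmless as a side remark but not a substitute for the missing character-equality argument.
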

\begin{proof}Using the Corollary 1.7 in \cite{Buzzard}. It suffices to prove the character of $f_1$ and $f_2$ are the same. For prime $p$, we have
$$a_{p^2}(f_1)=a_p(f_1)^2-p\cdot\chi_1(p),a_{p^2}(f_2)=a_p(f_2)^2-p\cdot\chi_2(p),$$
where $\chi_1$ and $\chi_2$  are the characters associated to $f_1$ and $f_2$ respectively. Hence, for any $p^2<n$, we have $\chi_1(p)=\chi_2(p)$. On the other hand, there exists a prime number generating of the group $(\mathbb{Z}/n\mathbb{Z})^\times$ and satisfying $p^2<n$ as well. Let $q$ be such a prime, then we have $\chi_1(q)=\chi_2(q)$. Which implies $\chi_1=\chi_2$.
\end{proof}

A direct corollary of the lemma above is the following fact.

\begin{cor}Let $n$ be a prime number, $f\in S_2(\Gamma_1(n))$  a newform and $\theta_f:\mathbb{T}(n,2)\to\mathbb{F}$ a surjective ring homomorphism associated to $f$, where $\char(\mathbb{F})\not=n$. Then the maximal ideal $\mathfrak{m}:=\ker \theta_f$ can be generated by $\ell$ and $T_k-a_k(f)$ with $1\le k\le n$ as an $\mathbb{T}(n,2)$-module.
\end{cor}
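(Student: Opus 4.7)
My plan is as follows. Set $\mathfrak{m}' := (\ell,\, T_k - \tilde a_k : 1 \le k \le n) \subseteq \mathbb{T}(n,2)$, where $\tilde a_k \in \mathbb{T}(n,2)$ is a chosen lift of $a_k(f) \in \mathbb{F}$ via $\theta_f$; the inclusion $\mathfrak{m}' \subseteq \mathfrak{m}$ is immediate. By the Sturm-bound statement noted in the preceding paragraph, $\mathfrak{m}$ is generated as an ideal by $\ell$ together with $T_k - \tilde a_k$ for $1 \le k \le (n^2-1)/6$, so it suffices to exhibit each such $T_k - \tilde a_k$ inside $\mathfrak{m}'$.

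I would proceed by induction on $k$ using the standard Hecke recursions. Coprime multiplicativity $T_{mm'} = T_m T_{m'}$ combined with $a_{mm'}(f) = a_m(f) a_{m'}(f)$ handles composite $k$: choosing $\tilde a_{mm'} := \tilde a_m \tilde a_{m'}$, one expands $T_{mm'} - \tilde a_{mm'} = T_m(T_{m'} - \tilde a_{m'}) + \tilde a_{m'}(T_m - \tilde a_m)$. The three-term recursion $T_{p^{j+1}} = T_p T_{p^j} - (T_p^2 - T_{p^2}) T_{p^{j-1}}$ for $p \nmid n$ (from the integral identity $p\langle p\rangle = T_p^2 - T_{p^2} \in \mathbb{T}(n,2)$) and the simpler $T_{n^j} = T_n^j$ reduce prime powers to the two-element base data $\{T_p, T_{p^2}\}$. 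The $T_p - \tilde a_p$ with $p \le n$ and the $T_{p^2} - \tilde a_{p^2}$ with $p \le \sqrt n$ are available by hypothesis; the first non-trivial case is $T_{p^2}$ for primes $\sqrt n < p \le n$. I would resolve it by mimicking the character-determination trick in the proof of the preceding lemma. Picking a prime $p_0 \le \sqrt n$ that generates $(\mathbb{Z}/n\mathbb{Z})^\times$ and writing $p \equiv p_0^{k_p} \pmod n$, the identity of diamond operators $\langle p\rangle = \langle p_0\rangle^{k_p}$ lifts to the integral relation $p_0^{k_p}(T_p^2 - T_{p^2}) = p\,(T_{p_0}^2 - T_{p_0^2})^{k_p}$ in $\mathbb{T}(n,2)$. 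Reducing modulo $\mathfrak{m}'$ and inverting the unit $p_0^{k_p}$ (valid because $p_0 \ne \ell$) then yields $T_{p^2} \equiv \tilde a_{p^2} \pmod{\mathfrak{m}'}$ for an appropriate choice of $\tilde a_{p^2}$.

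The main obstacle, which I expect to require the most care, is showing $T_p - \tilde a_p \in \mathfrak{m}'$ for primes $p$ in the range $n < p \le (n^2-1)/6$, since no Hecke recursion reduces $T_p$ to data at indices $\le n$. Here my plan is a structural argument. The preceding lemma implies that $\mathfrak{m}$ is the unique maximal ideal of $\mathbb{T}(n,2)$ containing $\mathfrak{m}'$: any such maximal ideal corresponds to a mod-$\ell$ newform whose first $n$ Hecke eigenvalues agree with those of $f$, and the lemma forces this newform to equal $f$. Hence $A := \mathbb{T}(n,2)/\mathfrak{m}'$ is an Artinian local $\mathbb{F}_\ell$-algebra with residue field $\mathbb{F}$. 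To upgrade this to $A = \mathbb{F}$, I would exploit the perfect $\mathbb{Z}$-pairing $\mathbb{T}(n,2) \times S_2(\Gamma_1(n), \mathbb{Z}) \to \mathbb{Z}$, $(T, g) \mapsto a_1(Tg)$: after reducing modulo $\ell$ and invoking a Gorenstein-type duality at $\mathfrak{m}$, the $\mathbb{F}$-dimension of $A$ coincides with that of the $\mathfrak{m}'$-annihilator of $S_2(\Gamma_1(n), \overline{\mathbb{F}}_\ell)$, which by the preceding lemma is the one-dimensional line spanned by the mod-$\mathfrak{p}$ reduction of $f$. Hence $A = \mathbb{F}$ and $\mathfrak{m}' = \mathfrak{m}$, which forces $T_p - \tilde a_p \in \mathfrak{m}'$ for the outstanding primes and completes the proof.
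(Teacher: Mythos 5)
Your approach diverges substantially from the paper's, which is far shorter: the paper simply asserts the isomorphism $\mathbb{T}(n,2)\otimes\mathbb{F}_\ell\cong\prod_{g}\mathbb{F}_\ell(g)$, i.e.\ that the Hecke algebra mod $\ell$ is a \emph{product of finite fields}, and once this is granted the whole corollary reduces in one line to the preceding lemma: in each factor $\mathbb{F}_\ell(g)$ with $g\neq f$ some $T_k-a_k(f)$, $k\le n$, is a unit by the lemma, so the ideal cuts out exactly the $f$-factor. Your route through Hecke recursions plus a localness-and-duality argument is a genuinely different (and more structural) strategy, and it has the merit of not silently assuming reducedness of $\mathbb{T}/\ell$. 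The recursion part, however, is ultimately a detour: if the final duality step worked it would give $\mathfrak{m}'=\mathfrak{m}$ outright, making the multiplicativity and three-term recursion manipulations superfluous.

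The difficulty is that the duality step has a real gap. Your claim is that $\dim_{\mathbb{F}_\ell}\mathbb{T}(n,2)/\mathfrak{m}'$ equals the $\mathbb{F}_\ell$-dimension of the $\mathfrak{m}'$-annihilator $S_2(\Gamma_1(n),\overline{\mathbb{F}}_\ell)[\mathfrak{m}']$, and that the preceding lemma forces this annihilator to be the line spanned by $\bar f$. But the lemma only says that two \emph{newforms} (i.e.\ simultaneous eigenvectors for the full Hecke algebra) agreeing in the first $n$ coefficients coincide. A form $g$ in $S_2[\mathfrak{m}']$ is only required to satisfy $(T_k-\tilde a_k)g=0$ for $k\le n$; it sits in the $\mathfrak{m}$-generalized eigenspace $S_2[\mathfrak{m}^\infty]$ but need not be an eigenvector for $T_m$ with $m>n$, so the lemma does not apply to it. If $\mathbb{T}_\mathfrak{m}/\ell$ is not a field, $S_2[\mathfrak{m}']$ can strictly contain $S_2[\mathfrak{m}]$, and your dimension count fails. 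In other words, the exact case your approach was designed to handle (a non-reduced local Hecke algebra mod $\ell$) is the case where the annihilator argument breaks. To close the gap you would need either a mod-$\ell$ multiplicity-one type input together with an argument that partial eigenvectors in $S_2[\mathfrak{m}^\infty]$ for $T_k$, $k\le n$, are already full eigenvectors, or else to invoke the same reducedness hypothesis the paper implicitly uses, at which point the argument collapses to the paper's one-line reduction. A minor additional point: in the ``uniqueness of the maximal ideal over $\mathfrak{m}'$'' step, the equalities $a_k(g)=a_k(f)$ for $k\le n$ are equalities of images of a single element $\tilde a_k\in\mathbb{T}(n,2)$ in two different residue fields $\mathbb{T}/\mathfrak{n}$ and $\mathbb{T}/\mathfrak{m}$; this needs a sentence fixing a compatible identification before the lemma can be quoted.
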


\begin{proof}We have isomorphism
$$\mathbb{T}(n,2)\otimes\mathbb{F}_\ell\cong\prod_{g}\mathbb{F}_\ell(g),$$
where $g$ runs through a set of representatives for the $\Gal(\overline{\mathbb{F}}_\ell/\mathbb{F}_\ell)$-conjugate classes of newforms in $S_2(\Gamma_1(n),\overline{\mathbb{F}}_\ell)$ and $\mathbb{F}_\ell(g)$ is the field generated by coefficients of $g$.

Now to prove the Corollary, it is enough to show for every representative newform $g$ and integer $m>n$, there exist $c_k\in \mathbb{F}_\ell(g)$ such that
$$a_m(g)-a_m(f)=\sum_{k=1}^n c_k\cdot(a_k(g)-a_k(f)).$$
This is true by the Lemma above.
\end{proof}

Moreover, a subset of $\{\ell, T_k-a_k(f), 1\le k\le n\}$ may also suffice to generate $\mathfrak{m}$, so we define.
\begin{definition}\label{definition:optimalset}Let $\mathcal{S}$ be a set of positive integers, such that $\ell$ and $T_k-a_k(f),k\in\mathcal{S}$ generate $\mathfrak{m}$ as an $\mathbb{T}(n,2)$-module. The set $\mathcal{S}$ is called optimal if $\prod_{k\in \mathcal{S}}k$ is minimal among all the $\mathcal{S}$.
\end{definition}
\begin{remark}In practice, the optimal set $\mathcal{S}$  contains only several small integers, much smaller compared to the theoretical bound. For example, when $n=13$, $\ell=7$, $\mathcal{S}=\{2\}$, $n=13$, $\ell=5$, $\mathcal{S}=\{2\}$ and $n=17$, $\ell=17$, $\mathcal{S}=\{2\}$.
\end{remark}

For every integer $k\ge 2$, the characteristic polynomial of $T_k$ acting on $S_2(\Gamma_1(n))$ is a degree $g$ monic polynomial belonging to $\mathbb{Z}[X]$. We denote it by $A_k(X)$, which can be factored as
$$A_k(X)\equiv B_k(X)(X-a_k(f))^{e_k}~\mod \mathfrak{p},$$
with $B_k(X)$ monic, $B_k(a_k(f))\not=0 \in \mathbb{F}$ and $e_k\ge 1$. We call $\pi_k:J_1(n)[\ell]\to J_1(n)[\ell],P\mapsto B_k(T_k)(P)$ the projection map. Which maps bijectively $J_1(n)[\mathfrak{m}]$ onto itself. Define the composition map as $\pi_\mathcal{S}:=\prod_{k\in\mathcal{S}}\pi_k$. Then for each point $P\in J_1(n)[\ell]$, a properly adjustment of  $\pi_\mathcal{S}(P)$ (applying the action $T_k-a_k(f)$ for $k\in \mathcal{S}$ when needed) gives a point in $J_1(n)[\mathfrak{m}]$, denoted as $\tilde{\pi}_\mathcal{S}(P)$.

The explicitly calculation is realized by computing sufficiently many $J_1(n)[\mathfrak{m}] \mod p$ first, and then recover $J_1(n)[\mathfrak{m}]$ by the Chinese Remainder Theorem.

\begin{theorem}Let $n$ and $\ell$ be a distinct prime numbers. Let $\mathbb{F}$ be a finite extension of $\mathbb{F}_\ell$. Assume  $\theta_f:\mathbb{T}(n,2)\to \mathbb{F}$ is a surjective ring homomorphism with kernel denoted as $\mathfrak{m}$. Assume $p$ is a good  prime not equal to $n$ or $\ell$.
Given a plane model of the modular curve  $X_1(n)$ and the Zeta function of $X_1(n)_{\mathbb{F}_p}$. Then a base of the two dimensional $\mathbb{F}$-vector space $J_1(n)[\mathfrak{m}]\mod p$ can be computed in time
$$\textrm{O}(n^{2+2\omega+\epsilon}\log^{2+\epsilon}q)+\textrm{O}(n^{4+2\omega+\epsilon}\log^{1+\epsilon}q),$$
where $q=p^{|\mathbb{F}^\times|}$.
\end{theorem}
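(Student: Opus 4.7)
The overall strategy is a Las Vegas algorithm: repeatedly produce a random $\mathbb{F}_q$-point of the $\ell$-torsion $J_1(n)(\mathbb{F}_q)[\ell]$ and push it through the composite projector $\tilde{\pi}_{\mathcal{S}}$ attached to an optimal set $\mathcal{S}$ from Definition~\ref{definition:optimalset}, stopping once two independent images have been collected inside $J_1(n)[\mathfrak{m}]$. The exponent in $q=p^{|\mathbb{F}^{\times}|}$ is chosen so that $J_1(n)[\mathfrak{m}]\subseteq J_1(n)(\mathbb{F}_q)$, making the target two-dimensional $\mathbb{F}$-space genuinely visible over the working base field. The supplied zeta function gives $N:=\#J_1(n)(\mathbb{F}_q)$, and a random $P\in J_1(n)(\mathbb{F}_q)$ produced from the plane model is driven into $J_1(n)(\mathbb{F}_q)[\ell]$ by one multiplication by $N/\ell^{v_\ell(N)}$ followed by a handful of further multiplications by $\ell$.

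For the projection itself I would, as a one-off preprocessing, compute the characteristic polynomial $A_k(X)$ of $T_k$ acting on $S_2(\Gamma_1(n))$ for each $k\in\mathcal{S}$ and factor it modulo $\mathfrak{p}$ as $A_k(X)\equiv B_k(X)(X-a_k(f))^{e_k}\pmod{\mathfrak{p}}$; this preprocessing is dominated by what follows. Evaluating $B_k(T_k)(P)$ is done by Horner's scheme along the orbit $P,T_k(P),\ldots,T_k^{\deg B_k}(P)$ under the Hecke correspondence attached to the plane model of $X_1(n)$, combining with the coefficients of $B_k$ through group operations in $J_1(n)(\mathbb{F}_q)$; a clean-up by $(T_k-a_k(f))^{e_k-1}$ when required then forces the image into $J_1(n)[\mathfrak{m}]$ rather than into the larger generalised eigenspace. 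Two such outputs are, with constant probability, an $\mathbb{F}$-basis of $J_1(n)[\mathfrak{m}]$, and linear independence is tested by a single $2\times 2$ determinant over $\mathbb{F}$.

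The cost then splits into the two advertised summands. The cofactor multiplication uses $O(\log N)=O(n^2\log q)$ doublings, each a single group operation in $J_1(n)(\mathbb{F}_q)$ of cost $O(n^{2\omega+\epsilon}\log^{1+\epsilon}q)$, yielding $O(n^{2+2\omega+\epsilon}\log^{2+\epsilon}q)$. The Hecke-projection step needs $O(\deg B_k)=O(n^2)$ applications of $T_k$ per $k\in\mathcal{S}$; a single application on a divisor class of size $O(g)=O(n^2)$ requires $O(n^2)$ further group operations at cost $O(n^{2\omega+\epsilon}\log^{1+\epsilon}q)$ each, for a per-application cost of $O(n^{2+2\omega+\epsilon}\log^{1+\epsilon}q)$; aggregating over the orbit (together with the absolute-constant-size $\mathcal{S}$) gives the second summand $O(n^{4+2\omega+\epsilon}\log^{1+\epsilon}q)$. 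The step I expect to be most delicate is obtaining a sharp per-application bound for $T_k$ acting on a divisor class presented in the plane-model representation, since this is where the geometry of the Hecke correspondence interacts nontrivially with Jacobian arithmetic; for this I would appeal to the group-operation bound of \cite{KM},~\cite{Hess} and to Couveignes's treatment of Hecke correspondences in \cite{Couveignes}, which is exactly where the constant $\omega$ enters the final bound.
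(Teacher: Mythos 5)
Your proposal reproduces the paper's own algorithm for computing $J_1(n)[\mathfrak{m}]\bmod p$ (determine $\mathbb{F}_q$ from the characteristic polynomial $X^2-a_p(f)X+\chi_f(p)p$ of Frobenius, read $\#J_1(n)(\mathbb{F}_q)$ off the given zeta function, kill the prime-to-$\ell$ part to land in the $\ell$-torsion, project via $\tilde\pi_{\mathcal{S}}$, and test independence), and it apportions the two summands to cofactor multiplication and to Hecke-projection arithmetic in exactly the way the stated bound suggests; the paper itself gives no more detail here, deferring the complexity analysis to the cited preprint of Zeng and Yin. The one assumption you make tacitly, and which the paper also leaves unjustified, is that the optimal set $\mathcal{S}$ has $O(1)$ elements: the surrounding remark supports this only by example, whereas the corollary preceding it only guarantees $\mathcal{S}\subseteq\{1,\ldots,n\}$ with each $\deg B_k$ possibly $\Theta(g)$, so a priori the second summand could acquire an extra factor of $n$.
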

\begin{remark}As mentioned in Section 1, $\omega$ is a constant in $[2,4]$ and $\epsilon$ is any positive real number. A prime $p$ is called good prime if it simultaneously satisfies: (1) The order of $X$ in $\mathbb{F}[X]/(F(X))$ is at most $|\mathbb{F}^\times|$, where $F(X):=X^2-a_p(f)X+\chi_f(p)\cdot p$ is the characteristic polynomial of $\textrm{Frob}_p$ on $J_1(n)[\mathfrak{m}]\mod p$. (2) Prime $p$ is $\mathfrak{m}$-good, see \cite{Bruin} for the detail.

This is one of the main results in \cite{Zeng}. The only different is that, here $J_1(n)[\mathfrak{m}]\mod p$  is a two dimensional $\mathbb{F}$-vector space, while in \cite{Zeng} $J_1(n)[\mathfrak{m}]\mod p$ is a two dimensional $\mathbb{F}_\ell$-vector space. The argument there can be easily extended to our cases .

\end{remark}

The calculation of $J_1(n)[\mathfrak{m}]\textrm{ mod } p$ can be split into the following steps.

\begin{algo}Computing $J_1(n)[\mathfrak{m}]\mod p$.

1. Compute the definition field of $J_1(n)[\mathfrak{m}]\mod p$ using the characteristic polynomial $X^2-a_p(f)X+\chi_f(p)\cdot p$ of $\textrm{Frob}_p$ on $J_1(n)[\mathfrak{m}]\mod p$, denoted by $\mathbb{F}_q$.

2. Compute $N_n:=\#J_1(n)(\mathbb{F}_q)$. Denote the prime-to-$\ell$ part of $N_n$ by $N_\ell$.

3. Construct an $\ell$-torsion point. We pick a random point $Q$ of $J_1(n)(\mathbb{F}_q)$ and compute $N_\ell\cdot Q$ (adjust it by multiplying $\ell$ several times when needed), denote the output $\ell$-torsion point by $P$.

4. Construct a point of $J_1(n)[\mathfrak{m}]\mod p$ by computing the projection $\tilde{\pi}_\mathcal{S}(P)$.

5. Construct a base of  $J_1(n)[\mathfrak{m}]\mod p$  by applying the Step 4 several times and checking linearly independence.
\end{algo}

Analogous to \cite{Zeng}, the complexity analysis here can be done without difficulty. We only remark here that the character $\chi_f$ associated to the newform $f$ can be computed by the formula
$$\chi_f(q)=\frac{a_q(f)^2-a_{q^2}(f)}{q} ~~,~~ \chi_f(m)=\chi_f(m~\textrm{mod}~n)~~\textrm{and}~~\chi_f(a\cdot b)=\chi_f(a)\cdot\chi_f(b),$$
where $q$ is any  prime number not equal to $\ell$.

Now, given sufficiently many $J_1(n)[\mathfrak{m}]\mod p$ for small primes $p$, we can recover $J_1(n)[\mathfrak{m}]$ in the following sense.

Let $O$ be a rational point of $X_1(n)$, which will be served as the origin of the Jacobi map. Any point $x\in J_1(n)[\mathfrak{m}]$ can be
represented uniquely by $D-\theta(x)\cdot O$, where $\theta(x)$ is the stability of $x$ (see \cite{Couveignes} for the definition) and $D$ is
an effective divisor of degree $\theta(x)$. Let $\psi(x)\in \mathbb{Q}(X_1(n))$ be a rational function, which has no pole except at $O$.  Define a
function $\iota:J_1(n)[\mathfrak{m}]\to\overline{\mathbb{Q}}$ as $\iota(x)=\psi(Q_1)+\ldots+\psi(Q_{\theta(x)})$, where $x$ is represented as
$x=Q_1+\ldots+Q_{\theta(x)}-\theta(x)\cdot O$. Then we have a polynomial describing $J_1(n)[\mathfrak{m}]$ as follow
$$P_\iota(X):=\sum_{x\in J_1(n)[\mathfrak{m}]\setminus O}(X-\iota(x)).$$
Which has degree $(\#\mathbb{F})^2-1$ and belongs to $\mathbb{Q}[X]$.

The heights of coefficients in $P_\iota(X)$ are well bounded. The following theorem can be found in \cite{Bruin} and \cite{Edixhoven}.

\begin{theorem}Notations as above. There exists a nonconstant function $\iota:J_1(n)[\mathfrak{m}]\to \overline{\mathbb{Q}}$ such that the heights of coefficients in $P_\iota(X)$ are bounded above by $\textrm{O}(n^\delta\cdot |\mathbb{F}|^2)$, where $\delta$ a constant independent of $n$ and $\mathfrak{m}$.
\end{theorem}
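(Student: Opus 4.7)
The plan is to reduce the claim to an Arakelov-theoretic height estimate on the arithmetic surface underlying $X_1(n)$, following the strategy of \cite{Edixhoven} and \cite{Bruin}. Since $P_\iota(X)=\prod_{x\in J_1(n)[\mathfrak{m}]\setminus O}(X-\iota(x))$ has degree $N:=(\#\mathbb{F})^2-1$ and its coefficients are elementary symmetric functions in the values $\iota(x)$, the standard inequality $h(\sigma_k(y_1,\ldots,y_N))\le \sum_j h(y_j)+N\log 2$ reduces the claim to proving $h(\iota(x))=\textrm{O}(n^{\delta})$ for each individual $x$; summing then yields the advertised $\textrm{O}(n^{\delta}\cdot|\mathbb{F}|^2)$ bound on the coefficients of $P_\iota(X)$.

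To bound $h(\iota(x))$, I would first fix a rational base point $O$ (for instance a cusp) and construct a nonconstant $\psi\in\mathbb{Q}(X_1(n))$ whose only pole is at $O$ and whose pole order $m$ is polynomial in $n$. Such a $\psi$ exists by Riemann--Roch as soon as $m>2g=\textrm{O}(n^2)$, and in practice can be read off the plane model that is part of the input, or extracted from a suitable modular unit. Writing $x=Q_1+\ldots+Q_{\theta(x)}-\theta(x)\cdot O$ with $\theta(x)\le |\mathbb{F}|^2$, we have $\iota(x)=\sum_i\psi(Q_i)$, so it suffices to bound each $h(\psi(Q_i))$ by $\textrm{O}(n^{\delta'})$ for an absolute constant $\delta'$.

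The bound on $h(\psi(Q_i))$ comes from Arakelov intersection theory on a regular model of $X_1(n)$ over $\mathbb{Z}$. Because $x$ is torsion, the N\'eron--Tate height $\hat h(x)$ vanishes, so $h(\psi(Q_i))$ is essentially the Arakelov intersection of the divisor $(Q_i)-(O)$ with the horizontal divisor of poles of $\psi$, together with an archimedean contribution expressed in terms of values of the Arakelov--Green function $g_{\textrm{Ar}}(Q_i,O)$ on $X_1(n)(\mathbb{C})$. Both pieces must grow only polynomially in $n$: the finite part is controlled by the degree of $\psi$ and the combinatorics of the bad fibres, while the archimedean part is controlled by sup-norm and $L^2$-norm bounds for weight-two cusp forms on $\Gamma_1(n)$.

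The main obstacle is precisely this last estimate. Providing an effective polynomial-in-$n$ bound on the Arakelov--Green function of $X_1(n)$, together with a polynomial bound on the Faltings height of $J_1(n)$, requires delicate analysis of the hyperbolic geometry of $Y_1(n)(\mathbb{C})$ near its cusps and of the bad reduction of $X_1(n)$ at primes dividing $n$. These ingredients are established in \cite{Edixhoven} and refined in \cite{Bruin}; once they are in hand, the constant $\delta$ is assembled from the pole order of $\psi$, the Faltings height bound, the Green function bound, and the Riemann--Roch dimensions.
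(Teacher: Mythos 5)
The paper does not actually prove this theorem: it states it and cites \cite{Edixhoven} and \cite{Bruin}, remarking only that a careful choice of $\iota$ lowers the constants in practice (with a further pointer to \cite{Derickx} and \cite{Zeng}). Your sketch is a faithful account of the Arakelov-theoretic strategy those references use --- reducing to height bounds on the individual values $\iota(x)$ via elementary symmetric functions, expressing $\iota(x)$ as a sum of values of a rational function $\psi$ with pole only at $O$, and then bounding $h(\psi(Q_i))$ by combining vanishing of the N\'eron--Tate height of torsion, the Arakelov intersection pairing on a regular model, and polynomial-in-$n$ bounds on the Arakelov--Green function and the Faltings height of $J_1(n)$ --- so you have correctly identified both the route and the genuinely hard technical inputs.

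One small slip worth flagging: you write $\theta(x)\le |\mathbb{F}|^2$, but the stability $\theta(x)$ of a point of the Jacobian is bounded by the genus $g=\mathrm{O}(n^2)$ (any degree-zero class is represented as $D-gO$ with $D$ effective of degree $g$, by Riemann--Roch); it has nothing to do with $|\mathbb{F}|$. Your stated bound, taken literally, would give $h(\iota(x))=\mathrm{O}(n^{\delta'}|\mathbb{F}|^2)$ and hence coefficients of height $\mathrm{O}(n^{\delta'}|\mathbb{F}|^4)$, which is weaker than what the theorem asserts and also contradicts the $h(\iota(x))=\mathrm{O}(n^\delta)$ you announced in your first paragraph. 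Using $\theta(x)\le g$ instead yields $h(\iota(x))=\mathrm{O}(n^{2+\delta'})$, and summing over the $|\mathbb{F}|^2-1$ torsion points gives exactly the claimed $\mathrm{O}(n^\delta\cdot|\mathbb{F}|^2)$ with $\delta=2+\delta'$.
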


The function $\iota$ is constructed explicitly in \cite{Edixhoven} also in \cite{Bruin}. In practice, a carefully chosen of the function $\iota$ will lower the height, see \cite{Derickx} and \cite{Zeng} for the detail.

The reduction polynomial $P_\iota(X) \mod p$ can be computed from $J_1(n)[\mathfrak{m}]\mod p$ directly. So we can reconstruct $P_\iota(X)$ by the Chinese Remainder Theorem.
Since the heights of coefficients of $P_\iota(X)$ are expected to be in $\textrm{O}(n^{\delta}|\mathbb{F}|^2)$, it suffices to recover $P_\iota(X)$ from $P_\iota(X)\mod p$ 's with primes $p$ bounded above by $\textrm{O}(n^{\delta}\cdot|\mathbb{F}|^2)$. So we have the following.

\begin{theorem}Notations as above, $P_\iota(X)$ can be computed in time
$$\textrm{O}(n^{2+\delta+2\omega+\epsilon}\cdot |\mathbb{F}|^{3+\epsilon}\cdot(n^2+|\mathbb{F}|)).$$
\end{theorem}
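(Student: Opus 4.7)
The plan is to compute each reduction $P_\iota(X)\bmod p$ for a family of small good primes $p$ and then reassemble $P_\iota(X)\in\mathbb{Z}[X]$ by the Chinese Remainder Theorem, using the coefficient height bound $\textrm{O}(n^\delta\cdot|\mathbb{F}|^2)$ supplied by the preceding theorem. First I would fix $B=c\cdot n^\delta|\mathbb{F}|^2$ for a suitable absolute constant $c$ and let $\mathcal{P}$ denote the set of good primes $p\notin\{n,\ell\}$ with $p\le B$; the effective Chebotarev-type argument used in the derivation of Corollary \ref{corollary:numberofgoodprimes}, combined with the good-prime density discussion in \cite{Bruin} and \cite{Zeng}, yields $|\mathcal{P}|=\textrm{O}(n^{\delta+\epsilon}|\mathbb{F}|^{2+\epsilon})$ and $\prod_{p\in\mathcal{P}}p>B$, which is exactly what is needed for an unambiguous CRT reconstruction.

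For each such $p$ I would invoke the preceding theorem to obtain a basis of the two-dimensional $\mathbb{F}$-space $J_1(n)[\mathfrak{m}]\bmod p$ in time $\textrm{O}(n^{2+2\omega+\epsilon}\log^{2+\epsilon}q)+\textrm{O}(n^{4+2\omega+\epsilon}\log^{1+\epsilon}q)$, where $q=p^{|\mathbb{F}^\times|}$. Since $\log q\le|\mathbb{F}|\log p\le|\mathbb{F}|\log B$, the logarithmic factors are absorbed into $n^\epsilon|\mathbb{F}|^\epsilon$ and the per-prime cost simplifies to
$$\textrm{O}(n^{2+2\omega+\epsilon}|\mathbb{F}|^{2+\epsilon})+\textrm{O}(n^{4+2\omega+\epsilon}|\mathbb{F}|^{1+\epsilon}).$$
I would then enumerate the $|\mathbb{F}|^2-1$ nonzero $\mathbb{F}$-linear combinations of the basis, evaluate $\iota$ at each using the supplied plane model of $X_1(n)$, and form the monic polynomial $P_\iota(X)\bmod p\in\mathbb{F}_p[X]$. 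These final evaluations and the product formation are polynomial in $n$, $|\mathbb{F}|$, and $\log p$, and are subsumed by the torsion-computation cost above.

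Multiplying the per-prime bound by $|\mathcal{P}|=\textrm{O}(n^{\delta+\epsilon}|\mathbb{F}|^{2+\epsilon})$ and collecting terms gives
$$\textrm{O}(n^{2+\delta+2\omega+\epsilon}|\mathbb{F}|^{4+\epsilon})+\textrm{O}(n^{4+\delta+2\omega+\epsilon}|\mathbb{F}|^{3+\epsilon})=\textrm{O}(n^{2+\delta+2\omega+\epsilon}\cdot|\mathbb{F}|^{3+\epsilon}\cdot(n^2+|\mathbb{F}|)),$$
which is exactly the claimed total; the CRT reconstruction itself is asymptotically negligible by the lattice argument recalled after Lemma in Section 2. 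The principal obstacle is bookkeeping: one must verify that the good-prime density stays positive throughout the window $[2,B]$ with $B$ only polynomial in $n$ and $|\mathbb{F}|$, and confirm that the factor $\log q=|\mathbb{F}^\times|\log p$ does not leak an extra power of $|\mathbb{F}|$ beyond what is stated. Both reduce to applications of the Weinberger/Chebotarev bound already invoked in Section 2, so no new input is required.
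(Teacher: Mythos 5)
Your proposal matches the paper's proof in essence: sum the per-prime cost from the preceding theorem over a window of primes $p=\textrm{O}(n^\delta|\mathbb{F}|^2)$, substitute $\log q\le|\mathbb{F}|\log p$, absorb logarithms into $\epsilon$, and conclude by CRT; you merely make explicit the good-prime density point and the CRT reconstruction step that the paper leaves implicit. (One small slip: what CRT requires is $\log\prod_{p\in\mathcal{P}}p$ to exceed the logarithmic height bound $\textrm{O}(n^\delta|\mathbb{F}|^2)$, which Chebyshev's theorem gives for your window $[2,B]$ — not the much weaker condition $\prod p>B$ that you wrote — but this does not affect the count.)
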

\begin{proof}
Because
\begin{displaymath}
\begin{split}
&\sum_{p\le n^\delta\cdot |\mathbb{F}|^2}\textrm{O}(n^{2+2\omega+\epsilon}\cdot\log^{2+\epsilon} q)+\textrm{O}(n^{4+2\omega+\epsilon}\cdot\log^{1+\epsilon }q)\\
&=\sum_{p\le n^\delta\cdot |\mathbb{F}|^2} \textrm{O}(n^{2+2\omega+\epsilon}\cdot (|\mathbb{F}|\cdot\log p)^{2+\epsilon})+\textrm{O}(n^{4+2\omega+\epsilon}\cdot (|\mathbb{F}|\cdot\log p)^{1+\epsilon})\\
&\le \textrm{O}(n^{2+\delta+2\omega+\epsilon}\cdot |\mathbb{F}|^{4+\epsilon})
+\textrm{O}(n^{4+\delta+2\omega+\epsilon}\cdot |\mathbb{F}|^{3+\epsilon})\\
&=\textrm{O}(n^{2+\delta+2\omega+\epsilon}\cdot |\mathbb{F}|^{3+\epsilon}\cdot(n^2+|\mathbb{F}|)).
\end{split}
\end{displaymath}
The theorem follows.
\end{proof}

Once we have the polynomial $P_\iota(X)$, the Galois representation
$$\rho_f:\textrm{Gal}(\overline{\mathbb{Q}}/\mathbb{Q})\to \textrm{GL}_2(\mathbb{F}),$$
is clear. More precisely as in \cite{Zeng}, we first lift the roots $a_i,1\le i\le|\mathbb{F}|^2-1$ of $P_\iota(X)$ to sufficiently high precision by Hensel lifting method and then compute all the polynomials $\Gamma_C(X)$ corresponding to the conjugacy classes $C$ of $\textrm{GL}_2(\mathbb{F})$. The polynomial $\Gamma_C(X)$ is defined as
$$\Gamma_C(X)=\prod_{\sigma\in C}(X-\sum_{i=1}^{|\mathbb{F}|^2-1}h(a_i)\sigma(a_i) ),$$
where $h(X)\in\mathbb{Q}[X]$ is an auxiliary polynomial with small degree. For each conjugacy class $C\subset \textrm{GL}_2(\mathbb{F})$, we have
$$\rho_f(\textrm{Frob}_p)\in C\Leftrightarrow \Gamma_C(\textrm{Tr}_{\frac{\mathbb{F}_p[x]}{P(x)}/\mathbb{F}_p}(h(x)x^p))\equiv0\mod p.$$
Using the criterion above, we can determine the matrix $\rho_f(\textrm{Frob}_p)$.

\begin{algo}\label{complexityofsinglefactor} Let $p$ and $\ell$ be prime numbers and
$$\rho_f:\textrm{Gal}(\overline{\mathbb{Q}}/\mathbb{Q})\to \textrm{GL}_2(\mathbb{F})$$
the mod-$\ell$ Galois representation as above. Then the characteristic polynomial of $\rho_f(\textrm{Frob}_p)$ can be computed in time

$$\textrm{O}(n^{2+\delta+2\omega+\epsilon}\cdot |\mathbb{F}|^{3+\epsilon}\cdot(n^2+|\mathbb{F}|)+n^{\delta+\epsilon}\cdot|\mathbb{F}|^{8+\epsilon}
+|\mathbb{F}|^5\cdot\log^{2+\epsilon}p),$$
by the following steps.

1. Compute the polynomial $P_\iota(X)=\sum_{x\in J_1(n)[\mathfrak{m}]\setminus O}(X-\iota(x))$.

2. Lift all the roots $a_i$ of $P_\iota(X)$ to a precision of $n^\delta\cdot|\mathbb{F}|^4$.

3. Compute the polynomials $\Gamma_C(X)$ for all the conjugacy classes of $\GL_2(\mathbb{F})$.

4. Determine the matrix $\rho_f(\textrm{Frob}_p)$ by checking whether $$\Gamma_C(\textrm{Tr}_{\frac{\mathbb{F}_p[x]}{P(x)}/\mathbb{F}_p}(h(x)x^p))\equiv0\mod p.$$
\end{algo}

Using Hensel lifting lemma, the complexity of lifting a root of a degree $d$ polynomial to precision $k$ is $\textrm{O}(d\cdot k^{1+\epsilon})$.
So the complexity of Step 2 is $|\mathbb{F}|^2\cdot\textrm{O}(|\mathbb{F}|^2\cdot(n^\delta|\mathbb{F}|^4)^{1+\epsilon})=\textrm{O}((n^\delta|\mathbb{F}|^8)^{1+\epsilon})$.
Given the lifted roots $a_i$, each polynomial $\Gamma_C(X)$ can be computed in time $\textrm{O}((|\mathbb{F}|^2\cdot( n^\delta|\mathbb{F}|^4))^{1+\epsilon})=\textrm{O}(( n^\delta|\mathbb{F}|^6)^{1+\epsilon})$. There are $|\mathbb{F}|^2-1$ conjugacy classes of $\GL_2(\mathbb{F})$ with length in $\textrm{O}(|\mathbb{F}|^2)$. So all the polynomials can be computed in $\textrm{O}(( n^\delta|\mathbb{F}|^8)^{1+\epsilon})$. Step 4 can be finished in $\textrm{O}(|\mathbb{F}|^5\cdot\log^{2+\epsilon}p)$ as showed in \cite{Zeng}.

Hence the total complexity is
$$\textrm{O}(n^{2+\delta+2\omega+\epsilon}\cdot |\mathbb{F}|^{3+\epsilon}\cdot(n^2+|\mathbb{F}|))+\textrm{O}(n^{\delta+\epsilon}\cdot|\mathbb{F}|^{8+\epsilon}
+|\mathbb{F}|^5\cdot\log^{2+\epsilon}p).$$

Now we can state the main algorithm.

\begin{algo}Compute the characteristic polynomial $P_n(t)$ of $\textrm{Frob}_p$ on $J_1(n)$.

1. Initialization. Let $f_1,\ldots,f_r$ be a set of representatives of $\Gal(\overline{\mathbb{Q}}/\mathbb{Q})$-conjugate classes of newforms in $S_2(\Gamma_1(n))$, with number field $\mathbb{Q}_{f_i}$ and extension degree $d_i:=[\mathbb{Q}_{f_i}:\mathbb{Q}]$ respectively. Set $x:=2n^6\log p\cdot\log(n^6\log p)$.

2. For each newform $f_i$, compute the set $\pi(x,\mathcal{O}_{f_i})$ by factoring ideals $\ell\mathcal{O}_{f_i}$, where $\ell$ is prime and $ 2\le \ell\le x$. Set $\mathcal{M}=\emptyset$.

3. For each $\mathfrak{p}\in \pi(x,\mathcal{O}_{f_i})$, denote $\mathbb{F}={\mathcal{O}_{f_i}}/\mathfrak{p}$ and $\ell=\char(\mathbb{F})$, compute the representation
$$\rho_{f_i}:\textrm{Gal}(\overline{\mathbb{Q}}/\mathbb{Q})\to \textrm{GL}_2(\mathbb{F}),$$
by Algorithm \ref{complexityofsinglefactor}.
Compute the characteristic polynomial of $\rho_{f_i}(\textrm{Frob}_p)$, denoted as  $R_\mathfrak{p}(t)$.
Compute $A_\mathfrak{p}(t)=\prod_{\sigma\in\textrm{Gal}(\mathbb{F}/\mathbb{F}_\ell)}R_\mathfrak{p}(t)^{\sigma}$ and update $\mathcal{M}=\mathcal{M}\cup\{(\ell,A_\mathfrak{p}(t))\}$.

3. Reconstruct the characteristic polynomial $P_{f_i}(t)$ of $\textrm{Frob}_p$ on the abelian variety $A_{f_i}$ from the data $\mathcal{M}$.

4. Output $P_n(t):=\prod_{i=1}^rP_{f_i}(t)$.

\end{algo}

\begin{remark}
As we have seen, the computation of all the polynomials $\Gamma_C(X)$ takes not only a lot of time but also a lot of memory.  Because for each representation $\rho_f:\textrm{Gal}(\overline{\mathbb{Q}}/\mathbb{Q})\to \textrm{GL}_2(\mathbb{F})$ , there are nearly $|\mathbb{F}|^2$ polynomials needed to recover. Moreover, the required precision is in $\textrm{O}(n^\delta|\mathbb{F}|^4)$, which is rather large. It is better to reduce the original polynomial $P_\iota(X)$ to a polynomial with small coefficients first. But a new problem arises: it is hard to reduce a polynomial, which has both huge coefficients and large degree ($=|\mathbb{F}|^2-1$).

To be rigorous we should get upper bounds of the time needed in Step 2 and Step 4.

The factorization of  ideal $\ell\mathcal{O}_{f_i}$ can be determined via polynomial factorization over finite field $\mathbb{F}_\ell$ with a complexity $\textrm{O}((d_i^{2+\epsilon}+d_i\log^{1+\epsilon} \ell)\cdot\log^{1+\epsilon}\ell )$ bit operations, see \cite{Gathen}. So the complexity of computing the set $\pi(x,\mathcal{O}_{f_i}) $ is
$$\sum_{\mathfrak{p}\in\pi(x,\mathcal{O}_{f_i})} \textrm{O}((d_i^{2+\epsilon}+d_i\log^{1+\epsilon} \ell)\cdot\log^{1+\epsilon}\ell ).$$
Which is bounded above by $\textrm{O}(n^{12+\epsilon}\log^{1+\epsilon}p)$. Notice that $d_i<g=\frac{(n-5)(n-7)}{24}$.

As showed in Section 2, We have that the running time of Step 3 is bounded above by $\textrm{O}(d_i^6\log ^3B)$, where $$B=\sqrt{|\mathcal{M}|}\cdot\prod_{(\ell,A_\mathfrak{p}(t))\in\mathcal{M}}\ell.$$
Thus
\begin{displaymath}
\log B=\frac{1}{2}\log|\mathcal{M}|+\sum_{(\ell,A_\mathfrak{p}(t))\in\mathcal{M}}\log\ell\le\frac{1}{2}\log x+x\log x.
\end{displaymath}
So the complexity of reconstructing $P_{f_i}(t)$ from the data $\mathcal{M}$ is bounded above by $\textrm{O}(n^{30+\epsilon}\log^{3+\epsilon} p)$.

\end{remark}

\section{Implementation and results}
The algorithm has been implemented in MAGMA. The following computation was done on a personal computer AMD FX(tm)-6200 Six-Core Processor 3.8GHz. We illustrate our algorithm by computing the characteristic polynomials (mod-$\ell$) of $\textrm{Frob}_p$ on $J_1(13)$ and $J_1(17)$, where $p$ is equal to $10^{1000}+1357$.

\begin{example}
Level $n=13$, $\ell=7$ and $p=10^{1000}+1357$.

The two dimensional space $S_2(\Gamma_1(13))$ is span by a single newform, denoted as
$$f(q)=q + (-\alpha - 1)\cdot q^2 + (2\alpha - 2)\cdot q^3 + \alpha\cdot q^4 + (-2\alpha + 1)\cdot q^5 + (-2\alpha + 4)\cdot q^6 + O(q^8),$$
where $\alpha$ is a root of $x^2 - x + 1$. The field generated by coefficients of $f$ is $\mathbb{Q}_f=\mathbb{Q}(\alpha)$. We have $\mathbb{Z}_f=\mathcal{O}_f=\mathbb{Z}[\alpha]$.
The prime $\ell=7$ splits completely in $\mathbb{Q}_f$, and the reductions of $f$ modulo $7$ contain two newforms with coefficients in $\mathbb{F}_{7}$, as follow
\begin{displaymath}
\begin{split}
&f_1(q)=q + q^2 + q^3 + 5q^4 + 5q^5 + q^6 + O(q^8),\\
&f_2(q)=q + 3q^2 + 4q^3 + 3q^4 + 2q^5 + 5q^6 + O(q^8).
\end{split}
\end{displaymath}
Denote the surjective morphism and mod-7 Galois representation associated to $f_i$ as $\theta_{f_i}:\mathbb{T}(n,2)\to\mathbb{F}_\ell$ and $\rho_{f_i}:\Gal(\overline{\mathbb{Q}}/\mathbb{Q})\to \GL_2(\mathbb{F}_{7})$ respectively. Using the Algorithm \ref{complexityofsinglefactor}, we computed the polynomial $P_i(X):=\prod_{x\in J_1(13)[\mathfrak{m}_i]\setminus O}(X-\iota(x))$. Let $q_i$ be the common denominator of coefficients of $P_i(X)$, then we have $q_1=11,q_2=23$ and
\begin{displaymath}
\begin{split}
11\cdot P_1(X)=&11 X^{ 48 }-1536 X^{ 47 }-99963 X^{ 46 }-2770225 X^{ 45 }-47983708 X^{ 44 }-591629167 X^{ 43 }\\
         &-5530796666 X^{ 42 }-40518974765 X^{ 41 }-235718747246 X^{ 40 }-1079974808307 X^{ 39 }-\\
         &3703420410826 X^{ 38 }-7605759696314 X^{ 37 }+7722875347212 X^{ 36 }+163461885735705 X^{ 35 }\\
         &+933845715340150 X^{ 34 }+3729957980566605 X^{ 33 }+11830678171677348 X^{ 32 }+\\
         &31022907406623640 X^{ 31 }+68071280480939026 X^{ 30 }+124294148205665813 X^{ 29 }+\\
         &183540094118403894 X^{ 28 }+200761571113722791 X^{ 27 }+106064052441293580 X^{ 26 }-\\
         &161268382392065434 X^{ 25 }-594425922266374264 X^{ 24 }-1074949608174189506 X^{ 23 }-\\
         &1394958011053869132 X^{ 22 }-1361031410365411144 X^{ 21 }-922433052155682350 X^{ 20 }-\\
         &224749370197814451 X^{ 19 }+464791623500074770 X^{ 18 }+910668196246156905 X^{ 17 }+\\
         &1027292221653623062 X^{ 16 }+885961994116863787 X^{ 15 }+631053161001956375 X^{ 14 }+\\
         &385831413164070220 X^{ 13 }+208737752200416506 X^{ 12 }+103130709785233526 X^{ 11 }+\\
         &48018067928428294 X^{ 10 }+21474802798164111 X^{ 9 }+9156461475778523 X^{ 8 }+\\
         &3598739833720301 X^{ 7 }+1245388931877955 X^{ 6 }+362616848767471 X^{ 5 }+\\
         &85245647038024 X^{ 4 }+15591113808897 X^{ 3 }+2146924473929 X^{ 2 }+\\
         &212924174747 X+12610968353.
\end{split}
\end{displaymath}

The matrices (up to similarity transformation) $\rho_{f_1}(\textrm{Frob}_p)$ and $\rho_{f_2}(\textrm{Frob}_p)$ are $\left[\begin{smallmatrix} 2 & 5 \\ 2 & 0 \end{smallmatrix}\right]$ and $\left[\begin{smallmatrix} 5 & 2 \\ 5 & 0 \end{smallmatrix}\right]$ respectively. Hence, the characteristic polynomial of $\textrm{Frob}_p$ on $J_1(13)$ satisfying,
$$P(t)\equiv(t^2-2t-10)(t^2-5t-10)\equiv t^4 + 2t^2 + 4\mod 7$$
so we have
$$\#X_1(13)(\mathbb{F}_p)\mod 7=(1+0+p)\mod 7=4$$
and
$$\#J_1(13)(\mathbb{F}_p)\mod 7=P(1)=0$$
\end{example}

\begin{example}
Level $n=17$, $\ell=17$ and $p=10^{1000}+1357$.

There are five newforms in
 $S_2(\Gamma_1(17))$ consisting of two $\Gal(\overline{\mathbb{Q}}/\mathbb{Q})$-conjugate
 classes. A set of representatives are
\begin{displaymath}
\begin{split}
&f_1(q)=q - q^2 - q^4 - 2q^5 + 4q^7 + O(q^8),\\
&f_2(q)=q + (-\alpha^3 + \alpha^2 - 1)\cdot q^2 + (\alpha^3 - \alpha^2 -\alpha - 1)\cdot q^3 + (2\alpha^3 - \alpha^2 + 2\alpha)\cdot q^4
    + O(q^5),
\end{split}
\end{displaymath}
where $\alpha$ is a root of $x^4 + 1$. The field generated by coefficients of $f_2$ is $\mathbb{Q}_{f_2}:=\mathbb{Q}(\alpha)$. We have $\mathbb{Z}_{f_2}=\mathcal{O}_{f_2}=\mathbb{Z}[\alpha]$. In fact $f_1$ is the newform in $S_2(\Gamma_0(17))$, corresponding to an elliptic curve over $\mathbb{Q}$, with a Weiestrass equation
$$E:y^2 + xy + y = x^3 - x^2 - x - 14.$$
The characteristic polynomial (mod $\ell$) of $\textrm{Frob}_p$ on $E$ is
$$P_{f_{\ell,1}}(t)=t^2-a_p\cdot t+p\mod \ell$$
where $a_p=p+1-\#E(\mathbb{F}_p)$. We can use elliptic point counting to get the number $\#E(\mathbb{F}_p)$. Which has a complexity $\textrm{O}(\log^{4+\epsilon}p)$, see \cite{Schoof}. We have $P_{f_\ell,1}(t)=t^2-11t+13$.

On the other hand, similar to the method described in Section 3, we can determine $P_{f_{\ell,1}}(t)$ by computing the Galois representation associated to the elliptic curve $E$. Namely, we compute
$$\rho_{E,\ell}:\Gal(\overline{\mathbb{Q}}/\mathbb{Q})\to \textrm{Aut}(E[\ell])\cong\GL_2(\mathbb{F}_{\ell}).$$
An interesting observation is that, we can compute $\#E(\mathbb{F}_p)$ in time polynomial in $\log p$ by computing sufficiently many mod-$\ell$ Galois representation associated to $E$.

The prime $\ell=17$ splits completely in the number field $\mathbb{Q}_{f_2}$. And the reductions of $f_2(q)$ modulo $\ell$ contain 4 newforms as follow
\begin{displaymath}
\begin{split}
&f_{2,1}(q)=q + 11q^2 +  6q^3 + 10q^4 + 4q^5 + 15q^6 + 9 q^7 + 11q^8 + 8 q^9 + 10q^{10}+ 2 q^{11} + \textrm{O}(q^{12}),\\
&f_{2,2}(q)=q + 14q^2 +  9q^3 +   q^4 + 6q^5 + 7 q^6 + 6 q^7 + 4 q^8 + 3 q^9 + 16q^{10} +10q^{11} + \textrm{O}(q^{12}),\\
&f_{2,3}(q)=q + 10q^2 + 14q^3 +  7q^4 + 2q^5 + 4 q^6 +   q^7 + 7 q^8 + 2 q^9 + 3 q^{10} +13q^{11} + \textrm{O}(q^{12}),\\
&f_{2,4}(q)=q + 12q^2 +   q^3 + 16q^4 + 5q^5 + 12q^6 + 14q^7 + 16q^8 + 12q^9 + 9 q^{10} +5q^{11} + \textrm{O}(q^{12}).
\end{split}
\end{displaymath}
Let $\rho_{f_{2,i}}:\Gal(\overline{\mathbb{Q}}/\mathbb{Q})\to\GL_2(\mathbb{F}_{17})$ be the mod-17 Galois representation associated to newform $f_{2,i}$ for $1\le i\le 4$. We have the matrices (up to similarity transformation) $\rho_{f_{2,i}}(\textrm{Frob}_p),1\le i\le 4$ are
$$\left[\begin{matrix} 2 & 1 \\ 0 & 2 \end{matrix}\right],
\left[\begin{matrix} 0 & 1 \\ 13 & 3 \end{matrix}\right],
\left[\begin{matrix} 15 & 1 \\ 0 & 15 \end{matrix}\right] \textrm{ and }
\left[\begin{matrix} 0 & 1 \\ 13 & 14\end{matrix}\right]
$$
respectively. So the characteristic polynomial of $\textrm{Frob}_p$ on $J_1(17)$ is
\begin{displaymath}
\begin{split}
P(t)&\equiv (t^2-11t+13)(t^2-4t+4)(t^2-3t-13)(t^2-30t+15^2)(t^2-14t-13) \mod 17\\
&\equiv t^{10} + 6t^9 + 4t^8 + 14t^7 + 8t^6 + 2t^5 + 2t^4 + 3t^3 + 16t^2 + 6t + 13\mod 17.
\end{split}
\end{displaymath}

From which, we have
$$\#X_1(17)(\mathbb{F}_p)\mod 17=(1+6+p)\mod 17=3,$$
and
$$\#J_1(17)(\mathbb{F}_p)\mod 17=P(1)\mod 17=7.$$

\end{example}

\end{document}